\numberwithin{equation}{section}
\newtheorem{theorem}{Theorem}[section]
\newtheorem{lemma}[theorem]{Lemma}
\newtheorem{corollary}[theorem]{Corollary}
\newtheorem{proposition}[theorem]{Proposition}
\theoremstyle{remark}
\newtheorem{remark}[theorem]{Remark}
\newcommand{\Z}{\mathbb{Z}}
\newcommand{\derivative}[1]{{d \over d{#1} }}
\newcommand{\N}{\mathbb{N}}
\begin{document}

\title[Classical $\beta$-ensembles with $\beta$ proportional to $1/N$]{The classical $\beta$-ensembles with $\beta$ proportional to $1/N$: from loop equations to Dyson's disordered chain}

\author{P.J. Forrester}
\affiliation{School of Mathematical and Statistics, ARC Centre of Excellence for Mathematical and Statistical Frontiers, The University of Melbourne, Victoria 3010, Australia}
\email{pjforr@unimelb.edu.au}

\author{G. Mazzuca}
\affiliation{International School for Advanced Studies (SISSA), Via Bonomea 265, 34136 Trieste, Italy}
\email{guido.mazzuca@sissa.it}

\date{\today}

\begin{abstract}
In the classical $\beta$-ensembles of random matrix theory, setting $\beta = 2 \alpha/N$ and taking the $N \to \infty$ limit gives a statistical state depending on $\alpha$. Using the loop equations for the
classical $\beta$-ensembles, we study the corresponding eigenvalue density, its moments, covariances of monomial linear statistics, and the moments of the leading $1/N$ correction to the density. From earlier literature, the limiting eigenvalue density is known to be related to classical functions. Our study gives a unifying mechanism underlying this fact, identifying, in particular, the Gauss hypergeometric differential equation determining the Stieltjes transform of the limiting density in the Jacobi case. Our characterisation of the moments and covariances of monomial linear statistics is through recurrence relations. Also, we extend recent work which begins with the $\beta$-ensembles in the high-temperature limit  and constructs a family of tridiagonal matrices referred to as $\alpha$-ensembles, obtaining a random anti-symmetric tridiagonal matrix with  i.i.d.~gamma distributed random variables. From this we can supplement analytic results obtained by Dyson in the study of the so-called type I disordered chain.
\end{abstract}

\maketitle

\section{Introduction}
The Gaussian $\beta$-ensemble refers to the eigenvalue probability density function (PDF)
proportional to 
\begin{equation}\label{0.1}
\prod_{l=1}^N e^{-\lambda_l^2} \,\prod_{1\leq j<k\leq N}|\lambda_k-\lambda_j|^{\beta}.
\end{equation}
Upon the scaling of the eigenvalues by setting
\begin{equation}\label{0.2}
\lambda_l =   \sqrt{\beta N} x_l,
\end{equation}
it is a well known fact that the eigenvalue density $\bar{\rho}_{(1)}(x)$, normalised to
integrate to unity, has the limiting form of the Wigner semi-circle law (see e.g.\cite[\S 1.4.2]{Fo10})
\begin{equation}\label{0.3}
\lim_{N \to \infty} \bar{\rho}_{(1)}(x) = {2 \over \pi} (1 - x^2)^{1/2} \chi_{|x| < 1},red{,}
\end{equation}
where $\chi_A = 1$ for $A$ true and
$\chi_A = 0$ otherwise. 
 \color{black}
 The use of the scaling variables (\ref{0.2}) --- often referred to as corresponding
 to the global regime; see e.g.~\cite{Fo20} --- also leads to many other consequences.
 For example, introduce the linear statistic $A = \sum_{j=1}^N a(x_j)$ for $a(x)$ smooth and
 bounded. The average with respect to (\ref{0.1}) then permits
 the $1/N$ expansion \cite{Jo98}
\begin{equation}\label{0.4} 
\Big \langle  \sum_{j=1}^N a(x_j) \Big \rangle  = N \int_{-\infty}^\infty  a(x) \bar{\rho}_{(1),0}(x) \, {\rm d}x +
  \int_{-\infty}^\infty  a(x) \bar{\rho}_{(1),1}(x) \, {\rm d}x + {\rm O} \Big ( {1 \over N } \Big ),
\end{equation}
 where
\begin{equation}\label{0.5} 
\bar{\rho}_{(1),1}(x) = \Big ( {1 \over \beta} - {1 \over 2} \Big )
\Big ( {1 \over 2} ( \delta ( x - 1) + \delta (x + 1) ) - {1 \over \pi \sqrt{1 - x^2}} \Big ).
\end{equation}

Equivalently, the smoothed eigenvalue density (i.e.~effective eigenvalue density upon integrating over a smooth test function), $\bar{\rho}_{(1)}^{\rm s}(x)$ say,
admits an expansion in $1/N$ powers, 
\begin{equation}
    \bar{\rho}_{(1)}^{\rm s}(x) = \sum_{j=0}^\infty \bar{\rho}_{(1),j}(x) N^{-j}, 
\end{equation}
where the first two terms are given by (\ref{0.3}) and
(\ref{0.5}) respectively.

Furthermore (see e.g.~\cite{PS11})
\begin{equation}\label{0.6} 
\lim_{N \to \infty} {\rm Var} \, A = \int_{-1}^1 {\rm d}x   \int_{-1}^1 {\rm d}y \,
\Big ( {a(x) - a(y) \over    x - y} \Big )^2   r_{(2),0}(x,y),
\end{equation}
where
\begin{equation}\label{0.7} 
  r_{(2),0}(x,y) = {1 \over 4 \pi^2} 
 {1 - xy \over \sqrt{1 - x^2} \sqrt{1 - y^2}}.
 \end{equation}
 
 The global regime is characterised by the spacing between the eigenvalues tending to zero,
 at such a rate that the statistical properties like those reviewed in the above paragraph have a
 well defined limit. This latter property is also shared by another choice of limit, corresponding to
 a scaled high temperature regime as specified by setting
 \begin{equation}\label{0.8}  
\beta = 2 \alpha /N, \qquad \alpha  > -1 \: \: {\rm fixed},
\end{equation}
before taking the large $N$ limit. The study of this limit was
introduced in the context of the Gaussian $\beta$-ensemble in \cite{ABG12}. 
Later is was considered for the Laguerre and Jacobi variants of (\ref{0.1})
\cite{ABMV13,TT19,TT20} , i.e.~the
primary examples of the classical ensembles in random matrix theory. Related to the $\beta$-ensembles with the scaling (\ref{0.8}) are certain classes of random tridiagonal matrices, with i.i.d.~entries along the diagonal, and (separately) along the leading diagonal,
now referred to as specifying $\alpha$-ensembles \cite{Ma20}.

After making
the $N$, $\beta$-independent change of scale $\lambda_j = x_j/\sqrt{2}$ in (\ref{0.1}), 
upon the limit (\ref{0.8}) 
 the density $\rho_{(1),0}(x) = \rho_{(1),0}(x;\alpha)$
is specified by the functional form
\cite{ABG12, DS15, Ma20}
\begin{equation}\label{0.8+}  
 \rho_{(1),0}(x;\alpha) = {e^{- x^2/2} \over \sqrt{2 \pi}} | \hat{f}_\alpha(x) |^{-2}, \qquad   \hat{f}_\alpha(x) = \sqrt{c \over \Gamma(\alpha)}
 \int_0^\infty t^{\alpha-1} e^{-t^2/2} e^{i x t} \, dt.
\end{equation} 
While it is to be anticipated that a $1/N$ expansion of the form (\ref{0.4}) will again hold --- and thus with the
first term known by way of (\ref{0.8+}) the task remaining is to characterise the analogue for
$ \bar{\rho}_{(1),1}(x)$ --- results from \cite{Tr17,NT18} (see also \cite{HL20}) tell us that in relation to the
variance
\begin{equation}\label{0.9+}  
{1 \over N} {\rm Var} \, A
\end{equation} 
has a well defined limit. Note that the factor of $1/N$ is absent on the LHS of (\ref{0.6}). However it remains to obtain
explicit formulas in relation to $A$.

In the present work we introduce a new approach --- making use of knowledge of the loop equations for the classical
$\beta$-ensembles \cite{BEMN10, BMS11, MMPS12, WF14, FRW17} --- to systematically study the high temperature scaling (\ref{0.8}). Choosing  the Gaussian $\beta$-ensemble for definiteness in this Introduction, the loop equation formalism allows for a systematic quantification of the quantities in the large $N$-expansion
\begin{equation}\label{WG1}
    {1 \over N} \Big \langle \sum_{j=1}^N {1 \over x - \lambda_i} \Big \rangle^{\rm G} \Big |_{\beta = 2 \alpha/N} = W_1^{0, \rm G}(x) + {1 \over N}W_1^{1, \rm G}(x) + \cdots,
\end{equation}
 where
\begin{equation}\label{WG2} 
W_1^{0, \rm G}(x) = \int_{-\infty}^\infty {\rho_{(1),0}^{\rm G} (\lambda; \alpha) \over x - \lambda} \, d \lambda, \qquad
W_1^{1, \rm G}(x) = \int_{-\infty}^\infty {\rho_{(1),1}^{\rm G} (\lambda; \alpha) \over x - \lambda} \, d \lambda,
\end{equation}
as well as in the large $N$-expansion
\begin{equation}\label{WG3} 
{1 \over N} {\rm Cov} \, \Big ( \sum_{i=1}^N {1 \over x - \lambda_i},
\sum_{i=1}^N {1 \over y - \lambda_i} \Big )^{\rm G}
\Big |_{\beta = 2 \alpha/N} :=
W_2^{0, \rm G}(x,y) + {1 \over N}W_2^{1, \rm G}(x,y)
+ \cdots,
\end{equation}
where Cov$\,( \cdot, \cdot)$ denotes the covariance of the respective linear statistics, and we use the superscript ``G'' to indicate the Gaussian ensemble in the scaling limit with $\beta =2 \alpha/N$.
Note that from knowledge of $W_1^{0, \rm G}(x)$ as specified in (\ref{WG2}), which is the Stieltjes transform of $\rho_{(1),0}^{\rm G} (\lambda; \alpha)$,
the corresponding inversion formula gives
\begin{equation}\label{u4}
\rho_{(1),0}^{\rm G}(x;\alpha) =  \lim_{\epsilon \to 0^+} {1 \over \pi}  {\rm Im} \,  W_1^{0,\rm G}(x - i \epsilon).
\end{equation}

For the classical ensembles generally, we will show that the loop equation formalism implies $W_1^0$ can be computed as the solution of a differential equation. This fact is already known in the Gaussian and Laguerre cases, but not for the Jacobi ensemble. This then allows for the computation of the leading order scaled density via (\ref{u4}). The differential equation characterisation also allows for the corresponding moments of the spectral density to  be determined via a recurrence. Again specialising to the Gaussian ensemble for definiteness, we see from performing an appropriate geometric series expansion in the first expression of (\ref{WG2}) that
\begin{equation}\label{v1}
 W_1^{0, \rm G}(x) = {1 \over x} \sum_{p=0}^\infty {m_{p,0}^{\rm G} \over x^p }, \qquad m_{p,0}^{\rm G} = \int_{-\infty}^\infty x^p  \rho_{(1),0}^{\rm G}(x;\alpha)  \, dx,
\end{equation}
where the formula for $ m_{p,0}^{\rm G}$ in terms of $\rho_{(1),0}^{\rm G}(x;\alpha) $ tells us that $\{ m_{p,0}^{\rm G} \}$ are the moments of the limiting 
eigenvalue density. 

\begin{proposition}\label{P01}
(Duy and Shirai \cite[Prop.~3.1]{DS15}.) The moments
$\{ m_{p,0}^{\rm G} \}_{p \: {\rm even}}$ satisfy the recurrence
\begin{equation}\label{v2}
m_{p+2,0}^{\rm G} = (p+1) m_{p,0}^{\rm G} + \alpha \sum_{s=0}^{p/2} m_{p-2s,0}^{\rm G} m_{2s,0}^{\rm G}, \qquad m_{0,0}^{\rm G} = 1, \: \: (p=0,2,4,\dots {\rm even}),
\end{equation}
while the odd moments all vanish by symmetry.
\end{proposition}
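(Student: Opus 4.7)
The strategy is the loop-equation approach promised in the paragraph containing \eqref{WG1}: derive an exact Schwinger--Dyson identity at finite $N$ and $\beta$, specialise to $\beta=2\alpha/N$ to extract the leading equation for $W_1^{0,\rm G}(x)$, and then substitute the large-$x$ series \eqref{v1} and match coefficients.

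Writing $\tilde W_1(x):=\frac1N\big\langle\sum_i (x-\lambda_i)^{-1}\big\rangle^{\rm G}$, the identity is obtained by integrating by parts in each $\lambda_i$ against the test function $(x-\lambda_i)^{-1}$ applied to \eqref{0.1} after the rescaling $\lambda_j\mapsto x_j/\sqrt 2$. The Gaussian weight contributes $\lambda_i/(x-\lambda_i)=x/(x-\lambda_i)-1$, the Vandermonde to the power $\beta$ contributes $\beta\sum_{j\neq i}[(x-\lambda_i)(\lambda_i-\lambda_j)]^{-1}$ which by symmetrisation in $i\leftrightarrow j$ equals $(\beta/2)\sum_{j\neq i}[(x-\lambda_i)(x-\lambda_j)]^{-1}$, and the derivative of the test function contributes $(x-\lambda_i)^{-2}$. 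Splitting the resulting squared empirical resolvent into mean-squared plus the covariance of \eqref{WG3}, dividing by $N$, and substituting $\beta=2\alpha/N$, one obtains
\begin{equation*}
1-x\tilde W_1(x)-\Big(1-\frac{\alpha}{N}\Big)\tilde W_1'(x)+\alpha\,\tilde W_1(x)^2+\frac{\alpha}{N^2}\,\mathrm{Cov}\Big(\sum_i\tfrac{1}{x-\lambda_i},\sum_i\tfrac{1}{x-\lambda_i}\Big)=0.
\end{equation*}
The covariance term is of order $1/N$ by \eqref{WG3}, so letting $N\to\infty$ yields the Riccati identity
\begin{equation*}
\alpha\big(W_1^{0,\rm G}(x)\big)^{2}-\frac{d}{dx}W_1^{0,\rm G}(x)-x\,W_1^{0,\rm G}(x)+1=0.
\end{equation*}

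Inserting $W_1^{0,\rm G}(x)=\sum_{p\geq 0}m_{p,0}^{\rm G}\,x^{-p-1}$ from \eqref{v1} and equating coefficients of $x^{-k}$ is now purely algebraic. The coefficient of $x^{0}$ gives the normalisation $m_{0,0}^{\rm G}=1$, the coefficient of $x^{-1}$ gives $m_{1,0}^{\rm G}=0$, and an induction (equivalently, the $\lambda\mapsto-\lambda$ symmetry of the weight) shows that all odd moments vanish. For $k=p+2\geq 2$ the coefficient of $x^{-k}$ reads $\alpha\sum_{r+q=p}m_{r,0}^{\rm G}m_{q,0}^{\rm G}+(p+1)m_{p,0}^{\rm G}-m_{p+2,0}^{\rm G}=0$, and restricting the convolution to even indices $r=2s$, $s=0,\dots,p/2$, gives exactly \eqref{v2}. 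The only non-algebraic point is justifying the passage to the $N\to\infty$ limit inside a nonlinear functional equation: one needs $\tilde W_1(x)$ and the covariance to genuinely admit the $1/N$ expansions \eqref{WG1} and \eqref{WG3} with sufficiently regular coefficients. For the scaling \eqref{0.8} this is supplied by the existence and regularity of the limiting density \eqref{0.8+} from \cite{ABG12,DS15}, together with the uniform-in-$N$ variance control of \cite{Tr17,NT18,HL20} cited in the excerpt, so the remaining issue reduces to a standard dominated-convergence argument.
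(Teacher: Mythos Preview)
Your proof is correct and follows essentially the same route as the paper: derive the Riccati equation \eqref{3.2} for $W_1^{0,\rm G}$ from the first loop equation at leading order in $N$, then substitute the series \eqref{v1} and match coefficients. The only cosmetic difference is that you rederive the $n=1$ loop equation by explicit integration by parts, whereas the paper quotes it as \eqref{3.10} from the literature and simply substitutes the ansatz \eqref{3.1}; your added remarks on justifying the $N\to\infty$ limit go slightly beyond what the paper makes explicit.
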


The loop equation formalism shows that $W_{2,0}^{\rm G}(x_1,x_2)$ satisfies a partial differential equation involving $W_{1,0}^{\rm G}(x_i)$ ($i=1,2$). No closed form solution is to be expected, but analogous to (\ref{v1}) if we expand about infinity by noting
\begin{equation}\label{v3}
W_2^{0, \rm G}(x_1,x_2) = {1 \over x_1 x_2}  \sum_{p,q=0}^\infty   {   \mu_{(p,q),0}^{\rm G}  \over x_1^p x_2^q}, \qquad
 \mu_{(p,q),0}^{\rm G}  =    \lim_{N \to \infty \atop \beta = 2 \alpha/ N} {\rm Cov} \, \Big ( \sum_{i=1}^N x_i^p, \sum_{i=1}^N x_i^q \Big )^{\rm G},
 \end{equation}
 then the partial differential equation allows
 $\{ \mu_{(p,q),0}^{\rm G} \}$ to be determined by a coupled recurrence involving $\{ m_{p,0}^{\rm G} \}$, already determined by (\ref{v2}).

\begin{proposition}\label{P02}
(Equivalent to Spohn \cite[Eqns.~(5.14), (5.15)]{Sp20}.)
For $p,q \ge 1$ of the same parity, meaning that they are either both even or odd, we have
 \begin{equation}\label{vf}   
 \mu_{(p,q),0}^{\rm G} = (p-1) \mu_{(p-2,q),0}^{\rm G} + q m_{p+q-2,0}^{\rm G} + 2\alpha \sum_{s=0}^{\lfloor p/2 - 1 \rfloor}
 m_{2s,0}^{\rm G} \mu_{(p-2-2s,q),0}^{\rm G}.
  \end{equation}  
  If $p=0$, or $q=0$, or $p,q$ have the opposite parity,
  $\mu_{(p,q),0}^{\rm G} =0$.
\end{proposition}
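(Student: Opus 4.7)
The plan is to derive (\ref{vf}) via the two-point (second) loop equation of the Gaussian $\beta$-ensemble, paralleling the route to Proposition~\ref{P01} from the first loop equation. Under the high-temperature scaling $\beta=2\alpha/N$, expanding each connected resolvent as $W_k = W_k^0 + N^{-1}W_k^1 + \cdots$ and reading off the leading-$N$ balance of the second loop equation should produce a linear PDE for $W_2^{0,\rm G}(x_1,x_2)$ of the expected schematic form
\begin{equation*}
x_1\, W_2^{0,\rm G}(x_1,x_2) + \partial_{x_1}W_2^{0,\rm G}(x_1,x_2) - 2\alpha\, W_1^{0,\rm G}(x_1)\, W_2^{0,\rm G}(x_1,x_2) = \frac{\partial}{\partial x_2}\!\left(\frac{W_1^{0,\rm G}(x_1) - W_1^{0,\rm G}(x_2)}{x_1 - x_2}\right),
\end{equation*}
in which $W_1^{0,\rm G}$ is already known from Proposition~\ref{P01}. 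The three-point correlator $W_3$ enters only at subleading order and so drops out of the leading balance, exactly as $W_2$ itself drops out at the leading order of the first loop equation used to obtain (\ref{v2}). The parity selection rule --- $\mu_{(p,q),0}^{\rm G}=0$ unless $p+q$ is even --- is a direct consequence of the $\lambda\mapsto-\lambda$ symmetry of the Gaussian weight, which makes $W_2^{0,\rm G}$ invariant under $(x_1,x_2)\mapsto(-x_1,-x_2)$.

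Concretely, the first step is to write down the second loop equation for the classical $\beta$-ensemble at finite $N$ (using a presentation such as those in \cite{BEMN10, BMS11, MMPS12, WF14, FRW17}), specialise to the Gaussian weight, substitute $\beta=2\alpha/N$, and track each term in powers of $1/N$; the competing factors $2/\beta = N/\alpha$ and the normalising $1/N$ of the resolvents in (\ref{WG1}) and (\ref{WG3}) combine to produce the leading balance above. The second step is to substitute (\ref{v1}) and (\ref{v3}) into this PDE and extract the coefficient of $x_1^{-p}\,x_2^{-(q+1)}$. The multiplication by $x_1$ produces $\mu_{(p,q),0}^{\rm G}$; the derivative $\partial_{x_1}W_2^{0,\rm G}$ produces $-(p-1)\mu_{(p-2,q),0}^{\rm G}$; the nonlinear piece $-2\alpha\, W_1^{0,\rm G}(x_1)\, W_2^{0,\rm G}(x_1,x_2)$ produces $-2\alpha\sum_{s=0}^{\lfloor p/2-1\rfloor} m_{2s,0}^{\rm G}\,\mu_{(p-2-2s,q),0}^{\rm G}$ (the upper cutoff being forced by power-counting together with the vanishing of odd-index Gaussian moments from Proposition~\ref{P01}); and the source contributes $q\,m_{p+q-2,0}^{\rm G}$ via the telescoping identity
\begin{equation*}
\frac{x_1^{-(n+1)} - x_2^{-(n+1)}}{x_1-x_2} \;=\; -\sum_{j=1}^{n+1} x_1^{-(n+2-j)}\, x_2^{-j}.
\end{equation*}
Rearranging reproduces (\ref{vf}), and the degenerate cases $p=0$, $q=0$, or $p,q$ of opposite parity reduce to trivial or empty identities.

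The main obstacle will be the derivation of the leading-order PDE itself --- pinning down the exact coefficients in front of $\partial_{x_1}W_2^{0,\rm G}$ and of the nonlinear term $W_1^{0,\rm G}W_2^{0,\rm G}$, establishing the precise form of the source, and verifying that $W_3^{0,\rm G}$ genuinely decouples at leading order under the scaling $\beta=2\alpha/N$. Once the PDE is in hand, the matching of Laurent coefficients is purely algebraic bookkeeping, and the resulting recurrence terminates at $p\in\{0,1\}$ on initial data provided by the covariance $\mu_{(1,q),0}^{\rm G}=q\, m_{q-1,0}^{\rm G}$ (valid for $q$ odd, by parity) together with $\mu_{(0,q),0}^{\rm G}=0$.
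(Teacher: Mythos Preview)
Your proposal is correct and follows essentially the same route as the paper: the paper substitutes the ansatz $\overline{W}_n = N\sum_{l\ge 0} W_n^l/N^l$ with $\kappa=\alpha/N$ into the $n=2$ Gaussian loop equation (\ref{3.10}), reads off at order $N$ precisely your PDE (their (\ref{3.2b}), with $W_3$ indeed dropping to subleading order), then inserts the expansions (\ref{v1}) and (\ref{v3}) and equates coefficients of $x_1^{-p+1}x_2^{-q+1}$ to obtain (\ref{vf}); the vanishing cases are handled exactly as you say, via the $\lambda\mapsto-\lambda$ symmetry (\ref{vc}) and the trivial identity (\ref{v4b}).
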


Our study of $W_1^{1, \rm G}(x)$ proceeds analogously. Introducing the expansions
\begin{equation}\label{x3}   
  W_1^{1, \rm G}(x) = {1 \over x} \sum_{p=1}^\infty {m_{2p,1}^{\rm G} \over x^{2p}},
  \end{equation}
\begin{equation}\label{w1}
  W_2^{0,\rm G}(x,x) = {1 \over x^2} \sum_{p=1}^\infty {\tilde{\mu}_{2p,0}^{\rm G} \over x^{2p}}, \qquad
  \tilde{\mu}_{2p,0}^{\rm G}  = \sum_{p_1 + q_1 = 2p} \mu_{(p_1,q_1),0}^{\rm G},
 \end{equation}  
 from the loop equations we can determine that
 $\{m_{2p,1}^{\rm G}\}$ satisfies a coupled recurrence with $\{ \tilde{\mu}_{2p,0}^{\rm G} \}$ and
 $\{ m_{2p,0}^{\rm G} \}$.
 
 \begin{proposition}\label{P03}
 We have
  \begin{equation}\label{x4m}  
   m_{2(p+1),1}^{\rm G} = - \alpha (2p+1) m_{2p,0}^{\rm G} + (2p+1) m_{2p,1}^{\rm G} + \alpha \tilde{\mu}_{2p,0}^{\rm G}
   + 2 \alpha \sum_{s=0}^{p-1} m_{2s,0}^{\rm G} m_{2 (p-s), 1}^{\rm G}, \quad m_{0,1}^{\rm G} = 0,
  \end{equation} 
  where $\{ m_{2p,0}^{\rm G} \}$ are determined by the recurrence (\ref{v2}), and $\{ \tilde{\mu}_{2p,0}^{\rm G} \}$ by the recurrence (\ref{w1+}) below.
\end{proposition}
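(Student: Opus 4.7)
The plan is to obtain (\ref{x4m}) as the order-$N^0$ consequence of the Gaussian $\beta$-ensemble loop equation, parallel to how (\ref{v2}) arises at order $N$. Write $U_1(x) = \langle\sum_j 1/(x-x_j)\rangle$ and $U_2(x,y) = \mathrm{Cov}(\sum_j 1/(x-x_j),\sum_j 1/(y-x_j))$ with respect to the PDF $\prod_j e^{-x_j^2/2}\prod_{j<k}|x_k-x_j|^\beta$, i.e.\ (\ref{0.1}) after the rescaling $\lambda_j=x_j/\sqrt 2$ used in (\ref{0.8+}). The standard integration-by-parts identity $\sum_j\int\partial_{x_j}\bigl[f/(x-x_j)\bigr]\,d\vec{x}=0$, combined with the symmetrization $\sum_{j\neq k}1/((x-x_j)(x_j-x_k)) = \tfrac12(G_1(x)^2 + G_1'(x))$ for $G_1(x)=\sum_j 1/(x-x_j)$, gives the exact loop equation
\begin{equation}\label{planLE}
xU_1(x) = N - \Bigl(1 - \tfrac{\beta}{2}\Bigr) U_1'(x) + \tfrac{\beta}{2}\bigl[U_1(x)^2 + U_2(x,x)\bigr].
\end{equation}

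Next I would substitute $\beta=2\alpha/N$ together with the expansions $U_1 = NW_1^{0,\mathrm{G}}+W_1^{1,\mathrm{G}}+O(1/N)$ and $U_2(x,x)=NW_2^{0,\mathrm{G}}(x,x)+O(1)$ implicit in (\ref{WG1})--(\ref{WG3}). The $O(N)$ coefficient of (\ref{planLE}) reproduces the leading loop equation $xW_1^{0,\mathrm{G}}=1+\alpha(W_1^{0,\mathrm{G}})^2-(W_1^{0,\mathrm{G}})'$, whose Laurent expansion in $1/x$ recovers (\ref{v2}). The key new input is the $O(1)$ coefficient, which reads
\begin{equation}\label{planNLO}
xW_1^{1,\mathrm{G}}(x) = \alpha(W_1^{0,\mathrm{G}})'(x) - (W_1^{1,\mathrm{G}})'(x) + \alpha W_2^{0,\mathrm{G}}(x,x) + 2\alpha W_1^{0,\mathrm{G}}(x)W_1^{1,\mathrm{G}}(x).
\end{equation}

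To derive (\ref{x4m}) I would then plug (\ref{v1}), (\ref{x3}), (\ref{w1}) into (\ref{planNLO}) and equate coefficients of $x^{-2p-2}$ on both sides. The LHS contributes $m_{2(p+1),1}^{\mathrm{G}}$; the four RHS terms contribute, in order, $-\alpha(2p+1)m_{2p,0}^{\mathrm{G}}$, $(2p+1)m_{2p,1}^{\mathrm{G}}$, $\alpha\tilde\mu_{2p,0}^{\mathrm{G}}$, and $2\alpha\sum_{s=0}^{p-1}m_{2s,0}^{\mathrm{G}}m_{2(p-s),1}^{\mathrm{G}}$, the last Cauchy product terminating at $s=p-1$ because $W_1^{1,\mathrm{G}}$ starts at order $1/x^3$. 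Equating the two sides gives (\ref{x4m}). The initial condition $m_{0,1}^{\mathrm{G}}=0$ holds because $\rho_{(1),1}^{\mathrm{G}}(x;\alpha)$ integrates to zero (the total eigenvalue count is exactly $N$, so every $1/N$ correction to the normalized density carries vanishing mass), and the odd-indexed $m_{p,1}^{\mathrm{G}}$ vanish by the $x\mapsto -x$ symmetry of the Gaussian weight, consistent with (\ref{x3}).

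The only real obstacle is the bookkeeping from (\ref{planLE}) to (\ref{planNLO}): the prefactor $(1-\beta/2)$ of $U_1'$ is deceptively $O(1)$, but under $\beta=2\alpha/N$ it splits as $1-\alpha/N$, and it is the cross-term $(-\alpha/N)\cdot N(W_1^{0,\mathrm{G}})'=-\alpha(W_1^{0,\mathrm{G}})'$ that, after moving to the RHS, supplies the ``new'' $\alpha(W_1^{0,\mathrm{G}})'$ summand in (\ref{planNLO}) and hence the $-\alpha(2p+1)m_{2p,0}^{\mathrm{G}}$ term in (\ref{x4m}). Once this is correctly tracked, no further analysis of $W_2^{0,\mathrm{G}}$ is required beyond the definition (\ref{w1}), and the rest of the argument is a mechanical identification of Laurent coefficients.
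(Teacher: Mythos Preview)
Your proposal is correct and follows essentially the same route as the paper: the paper substitutes the ansatz (\ref{3.1}) into the $n=1$ Gaussian loop equation (\ref{3.10}), reads off the ${\rm O}(1)$ identity (\ref{3.2a}) --- identical to your (\ref{planNLO}) --- and then extracts Laurent coefficients using (\ref{v1}), (\ref{w1}), (\ref{x3}) to obtain (\ref{x4m}). Your write-up is in fact slightly more explicit than the paper's, both in deriving the loop equation via integration by parts and in tracking how the $(1-\beta/2)$ prefactor generates the $-\alpha(2p+1)m_{2p,0}^{\rm G}$ term.
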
 

After revising relevant results relating to the loop equation formalism for the classical ensembles in Section \ref{S2}, we proceed in Sections \ref{S3}, \ref{S4}, \ref{S5} respectively to derive Propositions \ref{P01}--\ref{P03} and their analogues for the Gaussian,
Laguerre and Jacobi $\beta$-ensembles with high temperature scaling (\ref{0.8}). Our strategy also gives a unifying method to derive the functional form of the limiting density, given by (\ref{0.8+}) in the Gaussian case; for the Jacobi ensemble this is new. Thus the loop equations give a particular Riccati equation for the Stieltjes transform of the limiting density, which implies a linear second order differential equation when the latter is written as a logarithmic derivative. In the Jacobi case, the linear second order differential equation is a hypergeometric differential equation, which leads to a functional form for the limiting density in terms of a linear combination of Gauss hypergeometric, in agreement with a recent result of Trinh and Trinh \cite{TT20}.

In the recent work, \cite{Ma20} the classical $\beta$-ensembles in the high temperature limit have been used to construct a family of tridiagonal matrices referred to as $\alpha$-ensembles. Moreover, an application was given to the study of generalised Gibbs ensembles associated with the classical Toda lattice \cite{Sp20}.
 In \S \ref{S6}, beginning with the anti-symmetric Gaussian $\beta$-ensemble we identify a further example of an $\alpha$-ensemble, specified as a random anti-symmetric tridiagonal matrix, with  i.i.d.~gamma distributed random variables. Knowledge of the limiting spectral density for the Laguerre $\beta$-ensemble in the scaled high temperature limit can be used to determine the limit spectral density of this particular $\alpha$-ensemble. It is pointed out that the same random matrix ensemble appears in Dyson's \cite{Dy53} study of a disordered chain of harmonic oscillators. Our analytic results supplement those already contained in Dyson's work.

\section{Preliminaries}\label{S2}
\subsection{Quantities of interest in the loop equation formalism}
Introduce the notation ME${}_{\beta,N}[w]$ to denote a matrix ensemble with eigenvalue PDF proportional to
\begin{equation}\label{1.2}
\prod_{l=1}^Nw(\lambda_l)\,\prod_{1\leq j<k\leq N}|\lambda_k-\lambda_j|^{\beta},
\end{equation}
where $w(\lambda)$ is referred to as the weight function. Collectively, the terminology $\beta$-ensemble
is used in relation to (\ref{1.2}), and the name associated with the weight is specified as an adjective.
Thus, for example, ME${}_{\beta,N}[e^{-\lambda^2}]$ is referred to as the Gaussian $\beta$-ensemble,
in agreement with the terminology used in relation to (\ref{0.1}).
Let $\rho_{(1)}(\lambda)$ denote the corresponding eigenvalue density, specified by the requirement that $\int_a^b\rho_{(1)}(\lambda) \, \mathrm{d}\lambda$ be equal to the expected number of eigenvalues in a general
interval $[a,b]$.  Its Stieltjes transform  is given by
\begin{equation}\label{2.1}
\overline{U}_1(x)=\int_{-\infty}^{\infty}\frac{\rho_{(1)}(\lambda)}{x-\lambda} \, \mathrm{d}\lambda.
\end{equation}
Note that
\begin{equation}\label{2.2}
\overline{U}_1(x)=\left\langle\sum_{j=1}^N\frac{1}{x-\lambda_j}\right\rangle_{{\rm ME}_{\beta,N}[w]} =
\left\langle {\rm Tr} \, ( x \mathbb I_N - H)^{-1} \right \rangle_{ {\rm ME}_{\beta,N}[w]},
\end{equation}
where in the second average $H = {\rm diag} \, (\lambda_1,\dots, \lambda_N)$.
In matrix theory $( x \mathbb I_N - H)^{-1} $ is referred to as the resolvent. It is thus by abuse of terminology that
$\overline{U}_1(x)$ itself is often referred to as the resolvent.
The average in the first equality in \eqref{2.2} is an example of a one-point correlator. 
Its generalisation to an $n$-point correlator  is
\begin{equation}\label{2.3}
\overline{W}_n(x_1,\ldots,x_n)=\left\langle\sum_{j_1,\ldots,j_n=1}^N\frac{1}{(x_1-\lambda_{j_1})\cdots(x_n-\lambda_{j_n})}\right\rangle_{{\rm ME}_{\beta,N}[w]}.
\end{equation}

A feature of (\ref{2.2}) is that for a large class of weights $w$, there is a scale $x = c_N s$ such that in the variable $s$ and as $N \to \infty$ the eigenvalue support is
a finite interval, and moreover $\overline{W}_1(c_N s)$ can be expanded as a series in $1/N$ \cite{BG12}
\begin{equation}\label{2.2a}
c_N \bar{W}_1(c_N s) = N \sum_{\ell}=0^\infty {\overline{W}_1^\ell(s) \over (N \sqrt{\kappa})^\ell}, \qquad \kappa = \beta/2,
\end{equation}
where $\{  \overline W_1^l(s) \}$ are independent of $N$. For example, from (\ref{0.2}), in the case of the Gaussian $\beta$-ensemble
$c_N = \sqrt{\beta N}$.
An analogous expansion holds true in relation to the $n$-point statistic (\ref{2.3}),
but only after forming appropriate linear combinations of
$\overline{W}_n$. These are  the connected components of $\overline{U}_n$, specified by 
\begin{align}
\overline{W}_1(x)&=\overline{U}_1(x)\nonumber
\\ \overline{W}_2(x_1,x_2)&=\overline{U}_2(x_1,x_2)-\overline{U}_1(x_1)\overline{U}_1(x_2)\nonumber
\\ \overline{W}_3(x_1,x_2,x_3)&=\overline{U}_3(x_1,x_2,x_3)-\overline{U}_2(x_1,x_2)\overline{U}_1(x_3)-\overline{U}_2(x_1,x_2)\overline{U}_1(x_2)\nonumber
\\&\quad-\overline{U}_2(x_2,x_3)\overline{U}_1(x_1)+2\overline{U}_1(x_1)\overline{U}_1(x_2)\overline{U}_1(x_3), \label{2.4}
\end{align}
with the general case $ \overline{W}_k$ being formed by an analogous inclusion/ exclusion construction.
Going in the reverse direction, and thus specifying $\{ \overline{U}_j \}$ in terms of $\{ \overline{W}_j \}$, the  inductive
relation
\begin{equation}
\overline{U}_n(x_1,J_n)=\overline{W}_n(x_1,J_n)+\sum_{\emptyset\neq J\subseteq J_n}\overline{W}_{n-|J|}(x_1,J_n\setminus J)\overline{U}_{|J|}(J),\label{2.6}
\end{equation}
where
\begin{equation}
J_n=(x_2,\ldots,x_n),\quad J_1=\emptyset, 
\end{equation}
 holds true (see e.g. \cite[pp. 8-9]{WF15}). The utility of the connected components $\overline{W}_n$ is that (\ref{2.2a})
 admits the generalisation \cite{BG12},
\begin{equation}\label{2.7a}
c_N^n\overline{W}_n(c_Ns_1,\ldots,c_Ns_n)=N^{2-n}\kappa^{1-n}\sum_{l=0}^{\infty}\frac{W_n^l(s_1,\ldots,s_n)}{(N\sqrt{\kappa})^l},
\end{equation}
where again $\kappa = \beta/2$. Thus as $n$ increases by one, the large $N$ form decreases by a factor of $1/N$, with all
lower order terms given by a series in $1/N$.

\subsection{Explicit form of the loop equations for the classical ensembles}
Consider first the Gaussian $\beta$-ensemble ME${}_{\beta,N}[e^{-\lambda^2/2}]$ (here the rescaling of the eigenvalues $\lambda \mapsto  \lambda/\sqrt{2}$ is for convenience;
recall the text above (\ref{0.8})).
With $J_n$ as in (\ref{2.6}) the $n$-th loop equation is \cite{BEMN10,BMS11,MMPS12,WF14}
\begin{align}
0&=\left[(\kappa-1)\frac{\partial}{\partial x_1}-x_1\right]\overline{W}_n(x_1,J_n)+  N \chi_{n=1} \nonumber
\\&\quad+\chi_{n\neq1}\sum_{k=2}^n\frac{\partial}{\partial x_k}\left\{\frac{\overline{W}_{n-1}(x_1,\ldots,\hat{x}_k,\ldots,x_n)-\overline{W}_{n-1}(J_n)}{x_1-x_k}\right\}\nonumber
\\&\quad+\kappa\left[\overline{W}_{n+1}(x_1,x_1,J_n)+\sum_{J\subseteq J_n}\overline{W}_{|J|+1}(x_1,J)\overline{W}_{n-|J|}(x_1,J_n\setminus J)\right].\label{3.10}
\end{align}
Here the notation $\hat{x}_k$ indicates that the variable $x_k$ is not present in the argument, and thus $\overline{W}_{n-1}(x_1,\ldots,\hat{x}_k,\ldots,x_n) =
\overline{W}_{n-1}( \{x_j\}_{j=1}^n \setminus \{x_k \})$.

Consider next the Laguerre  $\beta$-ensemble ME${}_{\beta,N}[x^{\alpha_1} e^{-x} \chi_{x > 0}]$.
 The $n$-th loop equation is \cite[Eq.~(3.9)]{FRW17}
\begin{align}
\label{eq:loop_laguerre}
0&=\left[(\kappa-1)\frac{\partial}{\partial x_1}+\left(\frac{\alpha_1}{x_1}-1\right)\right]\overline{W}_n(x_1,J_n)+\chi_{n=1}\frac{N}{x_1}\nonumber
\\&\quad+\chi_{n\neq1}\sum_{k=2}^n\frac{\partial}{\partial x_k}\left\{\frac{\overline{W}_{n-1}(x_1,\ldots,\hat{x}_k,\ldots,x_n)-\overline{W}_{n-1}(J_n)}{x_1-x_k}+\frac{1}{x_1}\overline{W}_{n-1}(J_n)\right\}\nonumber
\\&\quad+\kappa\left[\overline{W}_{n+1}(x_1,x_1,J_n)+\sum_{J\subseteq J_n}\overline{W}_{|J|+1}(x_1,J)\overline{W}_{n-|J|}(x_1,J_n\setminus J)\right].
\end{align}

Finally, consider the Jacobi $\beta$-ensemble ME${}_{\beta,N}[x^{\alpha_1} (1 - x)^{\alpha_2} \chi_{0 < x < 1}]$.  The $n$-th loop equation is \cite[Eq.~(4.6)]{FRW17}
\begin{align}
0&=\left((\kappa-1)\frac{\partial}{\partial x_1}+\left(\frac{\alpha_1}{x_1}-\frac{\alpha_2}{1-x_1}\right)\right)\overline{W}_n(x_1,J_n)-\frac{n-1}{x_1(1-x_1)}\overline{W}_{n-1}(J_n)\nonumber
\\&\quad+\frac{\chi_{n=1}}{x_1(1-x_1)}\left[(\alpha_1+\alpha_2+1)N+\kappa N(N-1)\right]-\frac{\chi_{n\neq1}}{x_1(1-x_1)}\sum_{k=2}^nx_k\frac{\partial}{\partial x_k}\overline{W}_{n-1}(J_n)\nonumber
\\&\quad+\chi_{n\neq1}\sum_{k=2}^n\frac{\partial}{\partial x_k}\left\{\frac{\overline{W}_{n-1}(x_1,\ldots,\hat{x}_k,\ldots,x_n)-\overline{W}_{n-1}(J_n)}{x_1-x_k}+\frac{1}{x_1}\overline{W}_{n-1}(J_n)\right\}\nonumber
\\&\quad+\kappa\left[\overline{W}_{n+1}(x_1,x_1,J_n)+\sum_{J\subseteq J_n}\overline{W}_{|J|+1}(x_1,J)\overline{W}_{n-|J|}(x_1,J_n\setminus J)\right].\label{4.5}
\end{align}

\section{Solving the loop equations at low order with $\beta = 2 \alpha/N$ --- the Gaussian $\beta$-ensemble}\label{S3}
Our interest is in the scaling of $\beta$ proportional to the reciprocal of $N$, as specified
by (\ref{0.8}). As a modification of (\ref{2.7a}), we make the ansatz for the large $N$ expansion of 
$\overline{W}_n$ to have the form
\begin{equation}\label{3.1}
\overline{W}_n(x_1,\dots, x_n) = N \sum_{l=0}^\infty {W_n^l(x_1,\dots,x_n) \over N^{l}}.
\end{equation}
Note that this $N$ dependence is consistent with both (\ref{0.4}) and the factor of $1/N$ in
(\ref{0.9+}). 
We will consider each of the three  ensembles separately, beginning with the
Gaussian $\beta$-ensemble.

Consider (\ref{3.10}) with $n=1$. Upon the substitution (\ref{3.1}), by equating terms 
${\rm O}(N)$ we read off the equation for $W_1^0 = W_1^{0,\rm G}$
\begin{equation}\label{3.2}
\Big ( - {d \over d x }  - x \Big )  W_1^{0,\rm G}(x) + 1 + \alpha (  W_1^{0,\rm G}(x) )^2 = 0.
\end{equation}
Equating terms O$(1)$ gives an equation relating $W_1^{0,\rm G}(x), W_1^{1,\rm G}(x), W_2^{0,\rm G}(x,x)$,
\begin{equation}\label{3.2a}
\alpha {d \over d x }   W_1^{0,\rm G}(x)  + \Big (-     {d \over d x }  - x \Big )   W_1^{1,\rm G}(x) +
2 \alpha  W_1^{0,\rm G}(x)    W_1^{1,\rm G}(x)  + \alpha W_2^{0,\rm G}(x,x) = 0.
\end{equation}
For the appearance of  Riccati equations specifying the Stieltjes transform $W_1^0$ of other random matrix models in the context of loop equations, see \cite{EM09}.

We next consider (\ref{3.10}) with $n=2$.  Equating terms ${\rm O}(N)$ gives
an equation relating  $W_2^{0,\rm G}(x_1,x_2)$ to
$W_i^{0,\rm G}(x_i)$ ($i=1,2$). Thus
\begin{equation}\label{3.2b}
\Big (-     {\partial \over \partial x_1 }  - x_1 \Big )   W_2^{0, \rm G}(x_1,x_2)   +
{\partial \over \partial x_2} \bigg \{ {W_1^{0,\rm G}(x_1) - W_1^{0,\rm G}(x_2)   \over x_1 - x_2 } \bigg \} +
2 \alpha W_1^{0,\rm G}(x_1)    W_2^{0,\rm G}(x_1,x_2) = 0.
\end{equation}
Note that with $W_1^{0,\rm G}(x)$ specified by (\ref{3.2}),  (\ref{3.2b}) then allows us to specify
$W_2^{0,\rm G}(x_1,x_2)$.  In relation to $W_2^{0,\rm G}(x,x)$ appearing in (\ref{3.2a}),
we can first take the limit $x_1 \to x_2 = x$ in 
(\ref{3.2b}) to deduce
\begin{equation}\label{3.2b+}
\Big (-  {1 \over 2}    {d \over d x }  - x \Big )   W_2^{0,\rm G}(x,x)   +   {1 \over 2}
{d^2 \over d x^2}  W_1^{0,\rm G}(x)  +
2 \alpha  W_1^{0,\rm G}(x)    W_2^{0,\rm G}(x,x) = 0,
\end{equation}
where in the derivation use has been made of the symmetry $W_2^{0,\rm G}(x_1,x_2) = W_2^{0, \rm G}(x_2,x_1)$.
With  $W_2^{0,\rm G}(x,x)$ so now specified (albeit in terms of $W_1^{0, \rm G}(x)$),
substituting in (\ref{3.2a}) then allows for $W_1^{1,\rm G}(x)$ to be specified.

Let us now carry through this program, and in particular, quantify to what extent it
is possible to specify the quantities of interest.
 With regards to $W_1^{0,\rm G}$, the differential equation (\ref{3.2}) was first obtained
in the present context in \cite{ABG12}, having appeared much earlier in the orthogonal
polynomial literature \cite{AW84} where it relates to so-called associated Hermite polynomials
(for a different line of work in the recent random matrix theory literature relating to associated Hermite polynomials, see \cite{GK20}).
It is an example of a Ricatti nonlinear equation, and
as such can be linearised by setting
\begin{equation}\label{u0}
W_1^{0,\rm G}(x) = - {1 \over \alpha} {d \over d x} \log u(x), \qquad u(x) \mathop{\sim}\limits_{|x| \to \infty}
{A_1 \over x^\alpha},
\end{equation}
for some constant $A_1$. This substitution gives the second order linear
equation for $u(x)$,
\begin{equation}\label{u}
u'' + x u' + \alpha u = 0.
\end{equation}
The solution of (\ref{u}) satisfying the asymptotic condition in (\ref{u0}) is
\cite{ABG12} (see also \cite{Wu19})
\begin{equation}\label{u1}
u(x) = A_2 e^{- x^2/4} D_{-\alpha}(ix),
\end{equation}
where $D_{-\alpha}(z)$ is the so-called parabolic cylinder function with integral
representation
\begin{equation}\label{u2}
D_{-\alpha}(z) = {e^{-z^2/4} \over \Gamma(\alpha) }  \int_0^\infty t^{\alpha-1} e^{-zt - t^2/2} \, dt.
\end{equation}
Substituting in (\ref{u0}) it follows
\begin{equation}\label{u3}
W_1^{0,\rm G}(x) = {x \over 2 \alpha} - {1 \over \alpha} {d \over dx} \log D_{-\alpha}(ix).
\end{equation}

According to (\ref{WG2}) at leading order in $N$, $W_1^{0,\rm G}(x)$ is the
Stieltjes transform of $\rho_{(1),0}^{\rm G}(x;c)$.
The inversion formula (\ref{u4}), with $W_1^{0, \rm G}(x)$ given by (\ref{u3}),
implies \cite{ABG12} the explicit form  of the density
(\ref{0.8+}), or equivalently, upon recalling (\ref{u2})
\begin{equation}\label{u7}
\rho_{(1),0}^{\rm G}(x;\alpha) =
{1 \over \sqrt{2 \pi} \Gamma(1 + \alpha)}
{1 \over | D_{-\alpha}(ix) |^2}.
\end{equation}

\begin{remark}\label{R31}
Suppose in (\ref{3.2}) we scale $x \mapsto \sqrt{\alpha} y$ and $W_1^{0,\rm G}(x) \mapsto {1 \over \sqrt{\alpha}} W_1^{0, \rm G}(y)$. Then for large $\alpha$ (\ref{3.2}) reduces to the quadratic equation
\begin{equation}\label{u8}
- y W_1^{0, \rm G}(y) + 1 + (W_1^{0, \rm G} (y) )^2 = 0,
\end{equation}
with solution obeying $W_1^{0, \rm G}(y) \sim 1/y$ as $y \to \infty$
$$
W_1^{0, \rm G}(y) = {y - (y^2 - 4)^{1/2} \over 2}.
$$
The inversion formula (\ref{u4}) then implies
\begin{equation}
\lim_{\alpha \to \infty} \sqrt{\alpha}
\rho_{(1),0}^{\rm G}(\sqrt{\alpha} y; \alpha) = {1 \over \pi} (4 - y^2)^{1/2}, \qquad |y| < 2,
\end{equation}
which up to scaling is the Wigner semi-circle law
(\ref{0.3}); see also \cite{ABG12} for a discussion of this limit.
\end{remark}

Knowledge of the functional form (\ref{u3}) is not itself of practical use to specify
$W_2^{0,\rm G}(x_1,x_2)$ from (\ref{3.2b}). Instead we view (\ref{3.2}) as specifying the coefficients
$\{ m_{p,0}^{\rm G} \}$ in the expansion about $x = \infty$ of $ W_1^{0, \rm G}(x)$ (\ref{v1}).
 Since the density $\rho_{(1),0}^{\rm G}(x;\alpha) $ is even in $x$, we see that 
\begin{equation}\label{v1a}
m_{p,0}^{\rm G}=0, \qquad {\rm for \:}p \:{\rm odd}.
\end{equation}
Substituting in (\ref{3.2}) gives the recurrence (\ref{v2}).
 An alternative  specification of $\{ m_{2p,0}^{\rm G} \}$ follows by substituting the
known $x \to \infty$ expansion  of $D_{-\alpha}(ix)$ \cite[\S 12.9]{DLMF} in (\ref{u3}). This shows
\begin{align}\label{v2a}
W_1^{0,\rm G}(x) & = {1 \over x} - {1 \over \alpha} {d \over dx} \log \Big ( 1 + \sum_{s=1}^\infty
{(\alpha)_{2s} \over s! (2 x^2)^s } \Big ) \nonumber \\
& = {1 \over x} + {(1+\alpha) \over x^3} + {(3 + 5\alpha + 2 \alpha^2) \over x^5} + {(15 + 32 \alpha + 22 \alpha^2 + 5 \alpha^3) \over x^7} + \cdots
\end{align}
and thus (extending (\ref{v2a}) to include the term O$(1/x^9)$)
\begin{align}\label{v2a+}
m_{2,0}^{\rm G} & = 1 + \alpha  \nonumber  \\
m_{4,0}^{\rm G} & = 3 + 5\alpha + 2 \alpha^2  \nonumber  \\
m_{6,0}^{\rm G} & = 15 + 32 \alpha + 22 \alpha^2 + 5 \alpha^3  \nonumber  \\
m_{8,0}^{\rm G} & = 105 + 260 \alpha + 234 \alpha^2 + 93 \alpha^3 + 14 \alpha^4.
\end{align}

\begin{remark}\label{R1}
1. For even $p \ge 2$  the recurrence (\ref{v2}) can be rewritten as
\begin{equation}\label{v2b}
m_{p+2,0}^{\rm G} = (p+1+2 \alpha ) m_{p,0}^{\rm G} + \alpha \sum_{s=1}^{p/2 - 1} m_{p - 2s,0}^{\rm G} m_{2s,0}^{\rm G}.
\end{equation}
Indeed for even $p\geq 2$ we can rewrite \eqref{v2} as
\begin{equation}
    \begin{split}
        m_{p+2,0}^{\rm G} &= (p+1) m_{p,0}^{\rm G} + \alpha \sum_{s=0}^{p/2} m_{p-2s,0}^{\rm G} m_{2s,0}^{\rm G} \\ 
        &=(p+1) m_{p,0}^{\rm G} + 2\alpha m_{p,0}^{\rm G}m_{0,0}^{\rm G} + \alpha \sum_{s=1}^{p/2-1} m_{p-2s,0}^{\rm G} m_{2s,0}^{\rm G}\\ &=(p+1+2 \alpha ) m_{p,0}^{\rm G} + \alpha \sum_{s=1}^{p/2 - 1} m_{p - 2s,0}^{\rm G} m_{2s,0}^{\rm G},
    \end{split}
\end{equation}
where in the last equality we used that $m_{0,0}^{\rm G} = 1$.
According to  (\ref{v2a+}) $m_{2,0}^{\rm G} = (1 + \alpha)$, so it follows from (\ref{v2b}) that
\begin{equation}\label{v2c}
m_{2n,0}^{\rm G} = (1 + \alpha) \tilde{m}_{n,0}^{\rm G}, \qquad n \ge 1,
\end{equation}
where $ \tilde{m}_{n,0}^{\rm G}$ is a polynomial in $\alpha$ of degree $n-1$ satisfying the recurrence
\begin{equation}\label{v2d}
\tilde{m}_{n+1,0}^{\rm G} = (2n +1 + 2 \alpha) \tilde{m}_{n,0}^{\rm G} + \alpha ( 1 + \alpha) \sum_{s=1}^{n - 1} \tilde{m}_{n-s,0}^{\rm G} \tilde{m}_{s,0}^{\rm G}, \qquad \tilde{m}_{1,0}^{\rm G} = 1, \: \: (n\in \mathbb{N}).
\end{equation}
\color{black}
For combinatorial interpretations, see \cite{Dr09}.

2. As is well known in the theory of the Selberg integral (see \cite[\S 4.1]{Fo10}) the PDF (\ref{0.1}) specifying the Gaussian
$\beta$-ensemble is well defined for $\beta > - 2/N$, implying that the scaling (\ref{0.8}) is well defined for $\alpha > -1$ as
stated; see also \cite{AG13,AB19}. In particular this implies all the moments are non-negative for $\alpha > -1$.
From point 1.~above, we see that exactly at $\alpha = - 1$ all the moments $ \tilde{m}_{2p,0}^{\rm G}$ vanish for $p \ge 1$.

3. For the ensemble ME${}_{\beta,N}(e^{-x^2/2})$ the moments $m_{2k}^{(G)}$ are polynomials in $N$ of
degree $(k+1)$. We know from \cite{DE06,MMPS12,WF14} the explicit forms
\begin{align}\label{v2e}
m_{2}^{(G)} & = \kappa \Big ( N^2 + N ( - 1 + \kappa^{-1})  \Big ) \nonumber \\
m_{4}^{(G)} & =  \kappa^2 \Big ( 2 N^3 + 5 N^2 ( - 1 + \kappa^{-1}) + N (3 - 5 \kappa^{-1} + 3 \kappa^{-2})  \Big ) \nonumber \\
m_{6}^{(G)} & =  \kappa^3 \Big ( 5 N^4 + 22 N^3  ( - 1 + \kappa^{-1})  + N^2 (32 - 54 \kappa^{-1} + 32 \kappa^{-2})  \nonumber \\
& \quad + N (-15 + 32 \kappa^{-1} - 32 \kappa^{-2} + 15 \kappa^{-3})   \Big ) ,
\end{align}
where $\kappa := \beta/2$; in fact \cite{WF14} gives the explicit form of all moments up to and including
$m_{20}^{(G)}$. It follows from (\ref{v2e}) that
\begin{align}\label{v2f}
m_{2}^{(G)} \Big |_{\kappa = \alpha/N} & =  (1 + \alpha) N - \alpha  \nonumber \\
m_{4}^{(G)}  \Big |_{\kappa = \alpha/N}  & = (3 + 5\alpha + \alpha^2) N - 5 \alpha (1 + \alpha) + {3 \alpha^2 \over N}  \nonumber \\
m_{6}^{(G)}  \Big |_{\kappa = \alpha/N}  & = (15 + 32 \alpha + 22 \alpha^2 + 5 \alpha^3) N - 2 \alpha (16 + 27 \alpha + 11 \alpha^2) + {32 \alpha^2 (1 + \alpha) \over N} - {15 \alpha^3 \over N^2} .
\end{align}
We see that the polynomials in $\alpha$ multiplied by $N$ in these expansions agree with the leading moments in the
scaling limit with $\beta$ specified by (\ref{0.8}) as displayed in (\ref{v2a+}).

\end{remark}

To see the utility of (\ref{v1}) in relation to the equation (\ref{3.2b}) relating $W_2^{0, \rm G}(x_1,x_2)$ to $ W_1^{0, \rm G}(x)$, analogous to
(\ref{v1}) introduce the coefficients $\{ \mu_{(p,q),0}^{\rm G} \}$ in the expansion about $x_1, x_2 = \infty$ (\ref{v3}).
 We remark that the reasoning behind the formula in (\ref{v3}) expressing $ \mu_{(p,q),0}^{\rm G} $ in terms of the covariance is to first note from
 (\ref{2.3}) with $n=2$, and the second equation in (\ref{2.4}), that 
 \begin{equation}\label{v5}
 \overline{W}_2^{\rm G}(x_1,x_2) =  \Big \langle \Big (A(x_1) - \langle A(x_1) \rangle \Big )    
 \Big (A(x_2) - \langle A(x_2) \rangle \Big )  \Big \rangle^{\rm G} =:
 {\rm Cov} \, (A(x_1), A(x_2) )^{\rm G}
 \end{equation}
 where $A(x) = \sum_{j=1}^N 1/ (x - \lambda_j)$. Expanding  about  $x_1, x_2 = \infty$  and taking
 the limit $N \to \infty$ with $\beta$ specified by (\ref{0.8}) gives  (\ref{v3}).
 
 The definition in (\ref{v3}) implies the symmetry property
\begin{equation}\label{v4a}
  \mu_{(p,q),0}^{\rm G}  =  \mu_{(q,p),0}^{\rm G}.
  \end{equation}
  It is also immediate that
\begin{equation}\label{v4b}   
  \mu_{(0,q),0}^{\rm G}  =  \mu_{(p,0),0}^{\rm G} = 0.
 \end{equation}
 In addition, the symmetry of the PDF (\ref{0.1}) under the mapping $\lambda_l \mapsto - \lambda_l$ ($l=1,\dots,N$)
 implies
  \begin{equation}\label{vc} 
   \mu_{(p,q),0}^{\rm G}  =  0, \qquad {\rm for \:}p,q{\: \rm of \, different \, parity}.
 \end{equation}     
 
 We substitute both (\ref{v3}) and expansion of (\ref{v1}) in (\ref{3.2b}).
 After straightforward manipulation, this shows
 \begin{multline}\label{vd}
 {1 \over x_1^2} \sum_{p,q=0}^\infty (p+1) {\mu_{(p,q),0}^{\rm G} \over x_1^p x_2^q} - \sum_{p,q=0}^\infty {\mu_{(p,q),0}^{\rm G} \over x_1^p x_2^q}
 + {1 \over x_1 x_2} \sum_{p=0}^\infty m_{p,0}^{\rm G} \sum_{s=0}^p { (s + 1) \over  x_2^{s} x_1^{p-s}   }\\
 + {2 \alpha \over x_1^2}  \sum_{q=0}^\infty {1 \over x_2^q} \sum_{k=0}^\infty {1 \over x_1^k}
 \sum_{s=0}^k m_{s,0}^{\rm G}  \,  \mu_{(k-s,q),0}^{\rm G} = 0.
 \end{multline}
 Taking into consideration the vanishing properties (\ref{v1a}) and (\ref{v4b}), we can further manipulate
 (\ref{vd}) to read
 \begin{multline}\label{ve}
 \sum_{p=3, q=1}^\infty (p-1) {\mu_{(p-2,q),0}^{\rm G} \over x_1^{p-1} x_2^{q-1}} - \sum_{p,q=0}^\infty {\mu_{(p,q),0}^{\rm G} \over x_1^{p-1} x_2^{q-1}}
 + \sum_{r=0}^\infty m_{2 r,0}^{\rm G} \sum_{s=0}^{2r} { (s + 1) \over  x_2^{s} x_1^{2 r-s}   }\\
 +  2 \alpha \sum_{q=1}^\infty {1 \over x_2^{q-1}} \sum_{k=2}^\infty {1 \over x_1^{k-1}}
 \sum_{s=0}^{\lfloor k/2 - 1 \rfloor} m_{2s,0}^{\rm G}   \, \mu_{(k-2-2s,q),0}^{\rm G} = 0.
 \end{multline}
 Equating coefficients of $(x_1^{-p+1} x_2^{-q + 1})$ throughout gives
 the recurrence (\ref{vf}).

  \begin{corollary}\label{C1}
  Let $\{m_{p,0}^{\rm G} \}$ be specified by (\ref{v1a}) and (\ref{v2}). For $q \in \mathbb Z^+$ we have
  \begin{align}\label{x1+}
  {\mu}_{(1,q),0}^{\rm G} & = q m_{q-1,0}^{\rm G}   \nonumber \\
   {\mu}_{(2,q),0}^{\rm G} & = q m_{q,0}^{\rm G}   \nonumber \\
   \mu_{(3,q),0}^{\rm G} & =   2(1+\alpha) q m_{q-1,0}^{\rm G} + q m_{q+1,0}^{\rm G}  \nonumber \\
   \mu_{(4,q),0}^{\rm G} & = (3 + 2\alpha) q m_{q,0}^{\rm G} + q m_{q+2,0}^{\rm G}.
   \end{align}
   \end{corollary}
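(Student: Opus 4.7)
The plan is to derive the four identities in Corollary \ref{C1} by directly iterating the recurrence (\ref{vf}) from Proposition \ref{P02}, starting at $p=1$ and $p=2$ and bootstrapping upward to $p=3$ and $p=4$, while exploiting the boundary/vanishing properties (\ref{v4b}) and the initial value $m_{0,0}^{\rm G}=1$.

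First, I would handle the base cases. For $p=1$ and any odd $q\ge 1$, the sum $\sum_{s=0}^{\lfloor p/2-1\rfloor}$ in (\ref{vf}) is empty (since $\lfloor -1/2\rfloor = -1$), and the ``reduction'' term $(p-1)\mu_{(p-2,q),0}^{\rm G}$ vanishes because of the prefactor $(p-1)=0$. Hence (\ref{vf}) collapses to $\mu_{(1,q),0}^{\rm G} = q\, m_{q-1,0}^{\rm G}$. For $p=2$ and even $q\ge 2$, the sum has only the $s=0$ term, namely $m_{0,0}^{\rm G}\mu_{(0,q),0}^{\rm G}$, which vanishes by (\ref{v4b}), and $(p-1)\mu_{(0,q),0}^{\rm G}=0$ likewise; what remains is $\mu_{(2,q),0}^{\rm G} = q\, m_{q,0}^{\rm G}$.

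Next I would run the recurrence one step further in each parity class. For $p=3$ and $q$ odd, (\ref{vf}) gives
\begin{equation*}
\mu_{(3,q),0}^{\rm G} = 2\,\mu_{(1,q),0}^{\rm G} + q\, m_{q+1,0}^{\rm G} + 2\alpha\, m_{0,0}^{\rm G}\,\mu_{(1,q),0}^{\rm G},
\end{equation*}
and substituting the already-established value of $\mu_{(1,q),0}^{\rm G}$ together with $m_{0,0}^{\rm G}=1$ yields $2(1+\alpha)q\, m_{q-1,0}^{\rm G}+q\, m_{q+1,0}^{\rm G}$. For $p=4$ and $q$ even, the sum runs over $s=0,1$; the $s=1$ contribution is $m_{2,0}^{\rm G}\mu_{(0,q),0}^{\rm G}=0$ by (\ref{v4b}), while the $s=0$ term contributes $m_{0,0}^{\rm G}\mu_{(2,q),0}^{\rm G}=q\, m_{q,0}^{\rm G}$, giving
\begin{equation*}
\mu_{(4,q),0}^{\rm G} = 3\,\mu_{(2,q),0}^{\rm G} + q\, m_{q+2,0}^{\rm G} + 2\alpha\, q\, m_{q,0}^{\rm G} = (3+2\alpha)q\, m_{q,0}^{\rm G} + q\, m_{q+2,0}^{\rm G},
\end{equation*}
as claimed.

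There is no real obstacle here; the proof is essentially a bookkeeping exercise. The only point requiring any attention is making sure that the appropriate entries of the double-indexed family $\{\mu_{(p,q),0}^{\rm G}\}$ are known to vanish at the boundary, which is precisely the content of (\ref{v4b}) together with the parity constraint (\ref{vc}). Once these are noted, each case of the corollary is a one-line consequence of (\ref{vf}) after substituting the previously computed value of $\mu_{(p-2,q),0}^{\rm G}$.
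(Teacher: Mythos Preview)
Your proposal is correct and follows exactly the route the paper intends: the corollary is stated immediately after the recurrence (\ref{vf}) with no separate proof, so the implied argument is precisely the direct iteration for $p=1,2,3,4$ that you carry out. One cosmetic point: you restrict each case to $q$ of the same parity as $p$ in order to invoke (\ref{vf}), whereas the corollary is stated for all $q\in\mathbb Z^+$; but in the opposite-parity cases both sides vanish identically by (\ref{vc}) and (\ref{v1a}), so nothing is missing.
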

   
   \begin{remark}\label{RC1}
   1.~The symmetry (\ref{v4a}) is not apparent in (\ref{vf}), and thus not in (\ref{x1+}) either. Nonetheless, on a case-by-case basis,
   the evaluations (\ref{x1}) can be checked to be consistent with (\ref{v4a}). As an example, for
   $\mu_{(2,4),0}^{\rm G} =  \mu_{(4,2),0}^{\rm G}$, the  equality of the corresponding expressions in (\ref{x1+}) requires
   $m_{4,0}^{\rm G} = (3 + 2 \alpha) m_{2,0}^{\rm G}$ which from (\ref{v2a+}) is seen to hold true. \\
   2.~The covariances $\{ \mu_{(p,q),0}^{\rm G}\}$ have been studied in a recent work of Spohn \cite{Sp20}, where they were specified by a certain matrix equation with entries permitting a recursive evaluation. In fact the entry $(p,q)$ ($0 \le p \le q)$ of the matrix equation can be checked to be equivalent to the recurrence (\ref{vf}).
   \end{remark}

  We now turn our attention to the relation (\ref{3.2b}).
  Setting $x_1 = x_2 = x$ in (\ref{v3}) gives the expansion about $x = \infty$ (\ref{w1}).
 Substituting this and (\ref{v1})  in (\ref{3.2b}) gives the recurrence for $\{\tilde{\mu}_{2p,0}^{\rm G}\}$ 
 \begin{equation}\label{w1+} 
 \tilde{\mu}_{2p+2,0}^{\rm G} = (p+1)  \tilde{\mu}_{2p,0}^{\rm G} + (2p+1) (p+1) m_{2p,0}^{\rm G}  + 2 \alpha \sum_{l=1}^p \tilde{\mu}_{2l,0}^{\rm G} m_{2 (p - l), 0}^{\rm G}, \qquad \tilde{\mu}_{0,0}^{\rm G} = 0,
  \end{equation}  
valid for $p=0,1,2,\dots$. Here  $\{m_{2p}^{\rm G} \}$ are input, having been determined by (\ref{v2}).
The first three non-zero values implied by (\ref{w1}) are
 \begin{align}\label{x1}
  \tilde{\mu}_{2,0}^{\rm G} & = 1   \nonumber \\
   \tilde{\mu}_{4,0}^{\rm G} & =  8 ( \alpha + 1)   \nonumber \\
   \tilde{\mu}_{6,0}^{\rm G} & = 3 ( \alpha + 1) (23 + 16 \alpha).
   \end{align}
   Each can be checked to be consistent with the relationship between $\{  \tilde{\mu}_{2p,0}^{\rm G}  \}$ and
   $\{   \mu_{(p_1,q_1),0}^{\rm G} \}$ as specified in (\ref{w1}).
   
   With knowledge of both $\{ W_1^0(x) \}$ as determined by (\ref{v1}), (\ref{v1a}), (\ref{v2}) and
   $\{ W_2^{0,\rm G }(x,x) \}$ as determined by (\ref{w1}), (\ref{w1+}), introducing the expansion
   (\ref{x3})
  the equation (\ref{3.2a}) can be used to deduce a recurrence specifying $\{ m_{2p,0}^{\rm G} \}$,
  which is (\ref{x4m}) in Proposition \ref{P03} above.
  Iterating shows
   \begin{align}\label{x4}
m_{2,1}^{\rm G} & =  - \alpha   \nonumber \\
 m_{4,1}^{\rm G}  & =  - 5 \alpha ( \alpha + 1)   \nonumber \\
   m_{6,1}^{\rm G} & =  - 2 \alpha (16 + 27 \alpha + 11 \alpha^2),
   \end{align}
   which we see are all in agreement with the term independent of $N$ in the expansions (\ref{v2f}).

   \section{Solving the loop equations at low order with $\beta = 2 \alpha/N$ --- the Laguerre $\beta$-ensemble}\label{S4}
For the Laguerre $\beta$-ensemble ME${}_{\beta,N}(x^{\alpha_1} e^{-x})$, $\alpha_1 > -1$ , let the moments of the spectral density
be denoted $m_j^{ (L)}$. Analogous to (\ref{v2e}), each $m_j^{(L)}$ is a polynomial of degree $j$ in $N$ and $\kappa$
(and also in $\alpha_1$).
A listing of $\{  \tilde{m}_j^{ (L)} \}_{j=1,2,3}$ is given in \cite[Prop.~3.11]{FRW17} (see also \cite{MRW15}), where
 \begin{equation}\label{4.1}
 \tilde{m}_j^{(L)} = (N \kappa)^{-j} m_j^{(L)}.
\end{equation} 
 We read off that
 \begin{align}\label{4.2}
\alpha \tilde{m}_{1}^{(L)} \Big |_{\kappa = \alpha/N} & =  (1 + \alpha + \alpha_1)  - {\alpha \over N}  \nonumber \\
\alpha^2 \tilde{m}_{2}^{(L)}  \Big |_{\kappa = \alpha/N}  & = \Big ( (2 + 3 \alpha_1 + \alpha_1^2) + \alpha (4 + 3 \alpha_1) + 2 \alpha^2 \Big ) 
- {\alpha \over N} \Big ((4 + 3 \alpha_1) + 4 \alpha \Big ) + {\rm O} \Big ( {1 \over N^2} \Big )
  \nonumber \\
\alpha^3 \tilde{m}_{3}^{(L)}  \Big |_{\kappa = \alpha/N}  & = \Big ( (6 +11 \alpha_1 + 6 \alpha_1^2 + \alpha_1^3  ) + \alpha(17 + 21 \alpha_1 + 6 \alpha_1^2 ) +
\alpha^2 (16 + 10 \alpha_1) + 5 \alpha^3 \Big )    \nonumber \\
& \quad - {\alpha \over N} \Big ( (17 + 21 \alpha_1 + 6 \alpha_1^2) + \alpha(33 + 21 \alpha_1) + 16 \alpha^2 \Big )  + {\rm O} \Big ( {1 \over N^2} \Big ).
\end{align}
Note that here, in distinction to the case of fixed $N, \beta$, after taking the scaling limit with $\beta = 2c/N$ setting $\alpha_1=-1$ is now well defined. This is of importance for our application of the final section.

In view of this expansion we hypothesise that the functions $\overline{W}_n$ again exhibit the
$N^{-1}$ expansion (\ref{3.1}).
We begin by
enforcing this expansion  in the loop equation \eqref{eq:loop_laguerre} with $n=1$. Equating terms 
${\rm O}(N)$ we read off the equation for $W_1^0 = W_1^{0,\rm L}$ (the use of the superscript ``L'' is to indicate the Laguerre ensemble in the scaling limit with $\beta = 2c/N$),
\begin{equation}
		\label{eq:Lag_N_order}
		-{d \over d x}W_1^{0,\rm L}(x) +\left(\frac{\alpha_1}{x} -1\right)W_1^{0,\rm L}(x) + \frac{1}{x} + \alpha \left(W_1^{0,\rm L}(x)\right)^2 = 0.
	\end{equation}
Equating terms O$(1)$ gives an equation relating $W_1^{0,\rm L}(x), W_1^{1,\rm L}(x), W_2^{0,\rm L}(x,x)$,
\begin{equation}
		\label{eq:Lag_1_order}
		\alpha {d \over d x}W_1^{0,\rm L}(x) - {d \over d x}W_1^{1,\rm L}(x) +  \left(\frac{\alpha_1}{x} -1\right)W_1^{1,\rm L}(x) + \alpha W_2^{0,\rm L}(x,x) + 2 \alpha W_1^{0,\rm L}(x)W_1^{1,\rm L}(x) = 0.
	\end{equation}

We next consider the substitution of (\ref{3.1}) in \eqref{eq:loop_laguerre} with $n=2$.  Equating terms ${\rm O}(N)$ gives
an equation relating  $W_2^{0,\rm L}(x_1,x_2)$ to
$W_i^{0,\rm L}(x_i)$ ($i=1,2$). Thus
\begin{equation}
	\label{eq:Lag_2_N_order}
	\begin{split}
			-{\partial \over \partial x_1}W_2^{0,\rm L}&(x_1,x_2) + \left(\frac{\alpha_1}{x_1} -1\right)W_2^{0,\rm L}(x_1,x_2) \\
			& + {\partial \over \partial x_2}\left\{ \frac{W_1^{0,\rm L}(x_1) - W_1^{0,\rm L}(x_2)}{x_1-x_2} + \frac{W_1^{0,\rm L}(x_2)}{x_1}\right\} + 2\alpha W_2^{0,\rm L}(x_1,x_2)W_1^{0,\rm L}(x_1) = 0\, .
	\end{split}
\end{equation}

We notice that the differential equation \eqref{eq:Lag_N_order} --- a particular Ricatti equation --- can be solved explicitly. This was studied by Allez and collaborators in \cite{ABMV13}, and also appeared earlier in the orthogonal polynomial literature in the context of associated Laguerre polynomials \cite{Le93}. Analogous to (\ref{u0}), the substitution
\begin{equation}\label{u0L}
W_1^{0,\rm L}(x) = - {1 \over \alpha} {d \over d x} \log u(x), \qquad u(x) \mathop{\sim}\limits_{|x| \to \infty}
{B_1 \over x^\alpha},
\end{equation}
for some constant $B_1$, gives rise to the second order differential 
equation,
\begin{equation}\label{uL}
u'' + \Big ( 1 - {\alpha_1 \over x} \Big )  u' + {\alpha \over x}  u = 0.
\end{equation}
The required solution is given by
\cite[Eq.~(3.41), $\alpha \mapsto - 2 \alpha_1$, $\zeta = \alpha + \alpha_1/2$, $\mu = (1 + \alpha_1)/2$]{ABMV13}
\begin{equation}\label{uLa}
u(x) = B_2 e^{-x/2} x^{-\alpha_1/2} W_{-\alpha-\alpha_1/2,(1 + \alpha_1)/2}(-x),
\end{equation}
for some constant $B_2$, where $W_{\zeta,\mu}(z)$ denotes the Whittaker function. Substituting in (\ref{u0L}) shows
\begin{equation}\label{uLb}
W_1^{0,\rm L}(x) = {1 \over 2 \alpha} + {\alpha_1 \over 2 \alpha x} - {1 \over \alpha} {d \over dx }
\log W_{-\alpha - \alpha_1/2,(1 + \alpha_1)/2}(-x),
\end{equation}
and this, upon substituting the known large $x$ form of the Whittaker function \cite[\S 13.19]{DLMF} implies
\begin{align}\label{uLc}
W_1^{0,\rm L}(x) & = {1 \over x} - {1 \over \alpha}
{d \over d x} \log \Big ( 1 +
\sum_{s=1}^\infty {(\alpha)_s (1 +\alpha_1 + \alpha)_s \over s!} {1 \over x^s} \Big )  \nonumber \\
& = {1 \over x} + {(1 + \alpha_1 + \alpha) \over x^2} +
{(1 + \alpha_1 + \alpha)(2 \alpha + 2 + \alpha_1) \over x^3} \nonumber\\
& \qquad + {(1 + \alpha_1 + \alpha) (6+11 \alpha + 5 \alpha^2 + 5 (1 + \alpha) \alpha_1 + \alpha_1^2) \over x^4} + \cdots
\end{align}

Analogous to (\ref{v1}) $W_1^{0, \rm L}(x)$ is the
moment generating function of the corresponding Laguerre
$\alpha$-ensemble density,
\begin{equation}\label{v1L}
 W_1^{0, \rm L}(x) = {1 \over x} \sum_{p=0}^\infty {m_{p,0}^{\rm L} \over x^p }, \qquad m_{p,0}^{\rm L} = \int_{0}^\infty x^p  \rho_{(1),0}^{\rm L}(x;\alpha_1,\alpha)  \, dx.
\end{equation}
We thus read off from (\ref{uLc}) that
\begin{align}\label{Lk1}
    m_{1,0}^{\rm L} & = (1 + \alpha_1 + \alpha) \nonumber \\
m_{2,0}^{\rm L} & = (1 + \alpha_1 + \alpha)
(2 \alpha + 2 + \alpha_1) \nonumber \\
m_{3,0}^{\rm L} & = (1 + \alpha_1 + \alpha)
(6+11 \alpha + 5 \alpha^2 + 5 (1 + \alpha) \alpha_1 + \alpha_1^2).
\end{align}
These are all in agreement with the leading terms (in $N$) on the RHS of (\ref{4.2}). Furthermore, by substituting the expansion (\ref{v1L}) in the differential equation 
\eqref{eq:Lag_N_order} we see $\{m_{p,0}^{\rm L}\}$ satisfies the recurrence
\begin{equation}\label{v2Lm}
m_{p+1,0}^{\rm L} = (p+1 + \alpha_1 + \alpha) m_{p,0}^{\rm L} + \alpha \sum_{s=0}^{p-1} m_{s,0}^{\rm L} m_{p-s,0}^{\rm L}, \qquad{m}_{0,0}^{\rm L} = 1.
\end{equation} 
An immediate corollary is that $m_{p,0}^{\rm L}$ is a polynomial of degree $p$ in both $\alpha_1, \alpha$, as seen in the tabulation (\ref{Lk1}) for the low order cases. Moreover, analogous to point 1.~of Remark \ref{R1}, writing
$$
m_{p,0}^{\rm L} = (1 + \alpha_1 + \alpha)
\tilde{m}_{p,0}^{\rm L}, \qquad p \ge 1
$$
we see that $\tilde{m}_{p,0}^{\rm L}$ is a polynomial of degree $p-1$ in both $\alpha_1, \alpha$, satisfying
the recurrence
\begin{equation}\label{v2L}
\tilde{m}_{p+1,0}^{\rm L} = (p+1 + \alpha_1 + 2 \alpha) 
\tilde{m}_{p,0}^{\rm L} + \alpha (1 + \alpha_1 + \alpha)
\sum_{s=1}^{p-1}
\tilde{m}_{s,0}^{\rm L} \tilde{m}_{p-s,0}^{\rm L}, \qquad \tilde{m}_{1,0}^{\rm L} = 1.
\end{equation} 

The density $\rho_{(1),0}^{\rm L}(x;\alpha_1,\alpha)$ in (\ref{v1L}) can be deduced from knowledge of $W_1^{0, \rm L}$ as specified by (\ref{uLb}), together with the analogue of the inversion formula (\ref{u4}).
One finds  \cite[Eq.~(3.49), $\lambda \mapsto  2x$, $\zeta = \alpha + \alpha_1/2$, $\mu = (1 + \alpha_1)/2$]{ABMV13}
\begin{equation}\label{uLd}
\rho_{(1),0}^{\rm L}(x;\alpha_1,\alpha) =
{1 \over \Gamma(\alpha + 1) \Gamma(\alpha + \alpha_1 + 1)}
{1 \over | W_{-\alpha - \alpha_1/2,(1+\alpha_1)/2}(-x)|^2},
\end{equation}
supported on $x > 0$.

In relation to \eqref{eq:Lag_2_N_order}, introduce the Laguerre analogue of (\ref{v3})
\begin{equation}\label{v3L}
W_2^{0, \rm L}(x_1,x_2) = {1 \over x_1 x_2}  \sum_{p,q=1}^\infty   {   \mu_{(p,q),0}^{\rm L}  \over x_1^p x_2^q},
\qquad
 \mu_{(p,q),0}^{\rm L}  =    \lim_{N \to \infty \atop \beta = 2 \alpha/ N} {\rm Cov} \, \Big ( \sum_{i=1}^N x_i^p, \sum_{i=1}^N x_i^q \Big )^{\rm L}.
 \end{equation}
Proceeding as in the derivation of (\ref{vf}) shows
\begin{align}\label{v4L}
\mu_{(p+1,q),0}^{\rm L} & = (p+1+\alpha_1) \mu_{(p,q),0}^{\rm L} + q m_{p+q,0}^{\rm L} +
2 \alpha \sum_{s=0}^{p-1} m_{s,0}^{\rm L} \mu_{(p-s,q),0}
\nonumber \\
& = (p+1+\alpha_1 +2 \alpha) \mu_{(p,q),0}^{\rm L} + q m_{p+q,0}^{\rm L} +
2 \alpha \sum_{s=1}^{p-1} m_{s,0}^{\rm L} \mu_{(p-s,q),0}.
\end{align}

\begin{corollary}\label{C2}
  Let $\{m_{p,0}^{\rm L} \}$ be specified by (\ref{v1L}) and (\ref{v2L}). For $q \in \mathbb Z^+$ we have
  \begin{align}\label{d1+}
  {\mu}_{(1,q),0}^{\rm L} & = q m_{q,0}^{\rm L}   \nonumber \\
   {\mu}_{(2,q),0}^{\rm L} & = (2 + \alpha_1 + 2 \alpha) q m_{q,0}^{\rm L}  + q m_{q+1,0}^{\rm L} \nonumber \\
   \mu_{(3,q),0}^{\rm L} & =   (3 + \alpha_1 +2 \alpha) \mu_{(2,q)}^{\rm L} +
   2 \alpha (1 + \alpha_1 +2 \alpha)q m_{q,0}^{\rm L} + q m_{q+2,0}^{\rm L}.
   \end{align}
   \end{corollary}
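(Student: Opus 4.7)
The corollary is stated immediately after the recurrence (\ref{v4L}) is established, so the plan is simply to iterate that recurrence from its boundary data. Specifically, the three identities in (\ref{d1+}) are just the cases $p=0,1,2$ of (\ref{v4L}), provided we use the boundary value $\mu_{(0,q),0}^{\rm L} = 0$ (implicit in the fact that the double series in (\ref{v3L}) runs over $p,q\ge 1$, so there is no $x_1^{-1}$ or $x_2^{-1}$ contribution once the overall prefactor $1/(x_1 x_2)$ is extracted) and the normalisation $m_{0,0}^{\rm L} = 1$ from (\ref{v2Lm}).

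First I would take $p=0$ in (\ref{v4L}). The sum $\sum_{s=0}^{p-1}$ is empty and $\mu_{(0,q),0}^{\rm L}=0$, leaving only $\mu_{(1,q),0}^{\rm L} = q\, m_{q,0}^{\rm L}$, which is the first line of (\ref{d1+}). Next I would take $p=1$: here $\sum_{s=0}^{0} m_{s,0}^{\rm L}\mu_{(1-s,q),0}^{\rm L} = \mu_{(1,q),0}^{\rm L}$ since $m_{0,0}^{\rm L}=1$, so the recurrence gives
\begin{equation*}
\mu_{(2,q),0}^{\rm L} = (2+\alpha_1)\mu_{(1,q),0}^{\rm L} + q\, m_{q+1,0}^{\rm L} + 2\alpha\, \mu_{(1,q),0}^{\rm L} = (2+\alpha_1+2\alpha)\, q\, m_{q,0}^{\rm L} + q\, m_{q+1,0}^{\rm L},
\end{equation*}
where the first line of (\ref{d1+}) is substituted in the final step; this is the second line of (\ref{d1+}).

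Finally I would take $p=2$. Now $\sum_{s=0}^{1} m_{s,0}^{\rm L}\mu_{(2-s,q),0}^{\rm L} = \mu_{(2,q),0}^{\rm L} + m_{1,0}^{\rm L}\mu_{(1,q),0}^{\rm L}$, and combining the $(3+\alpha_1)\mu_{(2,q),0}^{\rm L}$ term of the recurrence with the $s=0$ contribution $2\alpha\,\mu_{(2,q),0}^{\rm L}$ produces the prefactor $(3+\alpha_1+2\alpha)$. Inserting the value $m_{1,0}^{\rm L} = 1+\alpha_1+\alpha$ from (\ref{Lk1}) and using the first line of (\ref{d1+}) to rewrite $\mu_{(1,q),0}^{\rm L} = q\,m_{q,0}^{\rm L}$ yields the third line of (\ref{d1+}).

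The proof is therefore purely mechanical, with no real obstacle; the only point that requires a moment of care is to confirm the boundary identity $\mu_{(0,q),0}^{\rm L}=0$ from the expansion (\ref{v3L}) (the Laguerre analogue of (\ref{v4b})), since unlike in the Gaussian case it is not isolated in a separate displayed equation above. One could optionally verify consistency, as was done for the Gaussian case in Remark~\ref{RC1}, by checking against the symmetry $\mu_{(p,q),0}^{\rm L} = \mu_{(q,p),0}^{\rm L}$ in a small explicit case, using the moment values (\ref{Lk1}) and the recurrence (\ref{v2Lm}).
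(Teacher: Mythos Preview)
Your proposal is correct and is exactly the paper's intended argument: the corollary is stated without proof immediately after (\ref{v4L}), and the three identities are just the instances $p=0,1,2$ of that recurrence together with $\mu_{(0,q),0}^{\rm L}=0$ and $m_{0,0}^{\rm L}=1$. One small caution: your $p=2$ step actually produces the coefficient $2\alpha\,m_{1,0}^{\rm L}=2\alpha(1+\alpha_1+\alpha)$ in front of $q\,m_{q,0}^{\rm L}$, whereas the displayed third line of (\ref{d1+}) has $2\alpha(1+\alpha_1+2\alpha)$; this is a typographical slip in the statement, not an error in your derivation.
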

 
 \begin{remark} As with $\{ \mu_{(p,q),0}^{\rm G} \}$, the recurrence (\ref{v4L}) is not symmetric upon the interchange $p \leftrightarrow q$, yet from the definition $\mu_{(p,q),0}^{\rm L}$ has this symmetry. As observed in Remark \ref{RC1} in the Gaussian case, on a case-by-case basis this symmetry can be checked from the explicit forms, in particular those in Corollary \ref{C2} combined with the tabulation (\ref{Lk1}).
 \end{remark}

The equation \eqref{eq:Lag_1_order}
for $W_1^{1, \rm L}$ requires knowledge of $W_2^{0,\rm L}(x,x)$.
 In regards to this quantity, letting $x_1\to x_2=x$ in \eqref{eq:Lag_2_N_order} shows
\begin{equation}\label{WxxL}
\begin{split}
    	-{d \over d x} W_2^{0,\rm L}(x,x) + 2\left(\frac{\alpha_1}{x} -1\right)W_2^{0,\rm L}(x,x) +  {d^2 \over d x^2}W_1^{0,\rm L}(x)&+ {2 \over x} {d \over d x} W_1^{0,\rm L}(x) \\ &+ 4\alpha W_2^{0, \rm L}(x,x)W_1^{0,\rm L}(x) = 0.
\end{split}
\end{equation} 
Introducing the expansion about $x = \infty$
\begin{equation}\label{w1L}
  W_2^{0,\rm L}(x,x) = {1 \over x^2} \sum_{p=1}^\infty {\tilde{\mu}_{p,0}^{\rm L} \over x^{p}}, \qquad
  \tilde{\mu}_{2p,0}^{\rm L}  = \sum_{p_1 + q_1 = p} \mu_{(p_1,q_1),0}^{\rm L}
 \end{equation}  
(cf.~(\ref{w1})), as well as the analogous expansion for $W_1^{0,\rm L}$ from (\ref{v1L}), reduces (\ref{WxxL}) to the recurrence
\begin{equation}\label{w2L}
 \tilde{\mu}_{p+1,0}^{\rm L} =
{1 \over 2} (p+2 + 2 \alpha_1 + 4 \alpha)
\tilde{\mu}_{p,0}^{\rm L} + {p (p+1) \over 2}
m_{p,0}^{\rm L} + 2 \alpha \sum_{s=1}^{p-1} 
\tilde{\mu}_{s,0}^{\rm L} m_{p-s,0}^{\rm L}.
\end{equation}
As done in relation to (\ref{w1+}), the implied evaluations for members of $\{ \tilde{\mu}_{p,0}^{\rm L}\}$ can be checked, for small $p$ at least, to be consistent with the relationship to $\{   \mu_{(p_1,q_1),0}^{\rm G} \}$,
and thus the tabulation (\ref{d1+}),
as required by the second equation in (\ref{w1L}).

With $\{m_{p,0}^{\rm L} \}$ determined by (\ref{uLc}) or
the recurrence (\ref{v2L}),
and $\{ \tilde{\mu}_{p,0}^{\rm L}\}$ determined by the recurrence (\ref{w2L}), by introducing the expansion
\begin{equation}\label{y1L}
  W_1^{1,\rm L}(x) = {1 \over x} \sum_{p=1}^\infty {m_{p,1}^{\rm L} \over x^{p}}
  \end{equation}
  we see from (\ref{eq:Lag_1_order}) that $\{m_{p,1}^{\rm L}\}$ can be determined by the recurrence
\begin{equation}\label{y2L}  
m_{p+1,1}^{\rm L} = - \alpha (p+1) m_{p,0}^{\rm L} + (p+1+\alpha_1 + 2 \alpha) m_{p,1}^{\rm L} + \alpha \tilde{\mu}_{p,0}^{\rm L} + 2 \alpha \sum_{l=1}^{p-1} \tilde{\mu}_{l,0}^{\rm L} m_{p-l,0}^{\rm L},
\end{equation}
valid for $p=1,2,\dots$ with initial condition $m_{0,1}^{\rm L}=0$. In particular, iteration shows
   \begin{align}\label{x4L}
m_{1,1}^{\rm L} & = - \alpha    \nonumber \\
 m_{2,1}^{\rm L}  & = - \alpha  \Big ((4 + 3 \alpha_1) + 4 \alpha \Big )  \nonumber \\
   m_{3,1}^{\rm L} & = - \alpha \Big ( (17 + 21 \alpha_1 + 6 \alpha_1^2) + \alpha(33 + 21 \alpha_1) + 16 \alpha^2 \Big ),
   \end{align}
   which we see are all in agreement with the term independent of $N$ 
   exhibited in the expansions (\ref{4.2}).

  \section{Solving the loop equations at low order with $\beta = 2 \alpha/N$ --- the Jacobi $\beta$-ensemble}\label{S5}
   For the Jacobi $\beta$-ensemble ME${}_{\beta,N}(x^{\alpha_1}(1 -  x)^{\alpha_2})$, let the moments of the spectral density
be denoted $m_j^{(J)}$. In distinction to the Gaussian and Laguerre $\beta$-ensembles, the
moments of the  Jacobi $\beta$-ensemble spectral density are no longer polynomials in $N$ and $\kappa$, but rather 
rational functions. The first two are given explicitly in \cite[App.~B]{MRW15}. From these we deduce
 \begin{align}\label{5.2}
 {1 \over N} m_1^{(J)} & = {\alpha_1 + 1 + \alpha \over \alpha_1 + \alpha_2 + 2 + 2 \alpha}
 - {1 \over N} { \alpha ( \alpha_2 - \alpha_1) \over (\alpha_1 + \alpha_2 + 2 + 2 \alpha)^2} 
 + {\rm O}\Big ( {1 \over N^2} \Big )
 \nonumber \\
 {1 \over N} m_2^{(J)} &  =  { (1 + \alpha + \alpha_1) \Big ( (2 + \alpha_1)(2 + \alpha_1 + \alpha_2)+
 \alpha (7 + 3 \alpha_1 + 2 \alpha_2) +  3 \alpha^2 \Big ) \over
 (2 + 2 \alpha + \alpha_1 + \alpha_2)^2 (3 + 2 \alpha + \alpha_1 + \alpha_2)}   \nonumber \\
 & \quad + {\alpha \over N} {Q_1(\alpha_1, \alpha_2,\alpha) + Q_2(\alpha_1, \alpha_2,\alpha)   \over
 (2 + 2 \alpha + \alpha_1 + \alpha_2)^3 (3 + 2 \alpha + \alpha_1 + \alpha_2)^2} + {\rm O} \Big ( {1 \over N^2} \Big ),
 \end{align}
 where
 $$
 Q_1(\alpha_1, \alpha_2,\alpha)  = - (1 + \alpha + \alpha_1) (2 + 2 \alpha + \alpha_1 + \alpha_2) (3 + 2 \alpha + \alpha_1 + \alpha_2)
 (9 + 7 \alpha + 4 \alpha_1 + 2 \alpha_2),
 $$
 \begin{align*}
  Q_2(\alpha_1, \alpha_2,\alpha)  & = \Big ( (2 + \alpha_1)(2 + \alpha_1 + \alpha_2) + \alpha (7 + 3 \alpha_1 + 2 \alpha_2) +3 \alpha^2  \Big )  \\
  & \quad \times 
  \Big ( 13 + 21 \alpha_1 + 2 \alpha_2 + (6 \alpha_1 - \alpha_2) (\alpha_1 + \alpha_2) + \alpha(23 + 17 \alpha_1 + 3 \alpha_2) + 10 \alpha^2 \Big ).
  \end{align*}


 As in the Gaussian and Laguerre cases, to solve the Jacobi $\beta$-ensemble loop equations \eqref{4.5} in the regime $\beta = 2 \alpha/N$ we will make the ansatz (\ref{3.1}). We see that with $n=1$ the latter is consistent with the form of the expansions (\ref{5.2}).
 Equating the terms of order ${\rm O}(N)$ gives
 the particular Riccati type equation for   $W_1^{0,\rm J}$ ( here we use the superscript ''J'' to indicate the Jacobi ensemble in the scaling limit with $\beta = 2c/N$)
 \begin{equation}
		\label{eq:Jac_N_order}
		-{d \over d x} W_1^{0,\rm J}(x) +\left(\frac{\alpha_1}{x} -\frac{\alpha_2}{1-x}\right)W_1^{0,\rm J}(x) + \frac{1}{x(1-x)}\left(1+\alpha_1+\alpha_2+\alpha \right) + \alpha \left(W_1^{0,\rm J}(x)\right)^2 = 0.
	\end{equation}
	
Being a Ricatti type equation, it is most natural to proceed in the analysis of (\ref{3.2}) and (\ref{eq:Lag_N_order}) and perform	the change of variables
\begin{equation}
\label{eq:psichange}
    W_1^{0, \rm J}(x) = -\frac{\psi'(x)}{\alpha \psi(x)}, \qquad \psi(x) \mathop{\sim}\limits_{|x| \to \infty} {C_1 \over x^\alpha} 
\end{equation}
for some constant $C_1$.
We see that $\psi(x)$ satisfies the second order linear differential equation
\begin{equation}
    x(x-1)\psi''(x)  + \left( \alpha_1 - (\alpha_1 + \alpha_2)x\right)\psi'(x) - \alpha\left( 1+\alpha + \alpha_1 + \alpha_2\right)
    \psi(x) = 0,
\end{equation}
this being a particular hypergeometric differential
equation \cite[\S 15.10]{DLMF}. Due to the condition \eqref{eq:psichange} we have for the general solution 
\begin{equation}
\label{eq:hyp_sol}
    \psi(x) = C_1x^{-\alpha}{}_2F_1\left( \alpha, \alpha + \alpha_1+1, 2\alpha + \alpha_1+\alpha_2 + 2; x^{-1} \right),
\end{equation}
where ${}_2 F_1$ denotes the usual Gauss hypergeometric function.
After some algebraic manipulations, this implies
\begin{align}\label{MPJ1}
W_1^{0, \rm J}(x) & = \frac{1}{x} + \frac{\alpha + \alpha_1+1}{2\alpha + \alpha_1+\alpha_2 + 2}\frac{{}_2F_1\left( \alpha+1, \alpha + \alpha_1+2, 2\alpha + \alpha_1+\alpha_2 + 3; x^{-1} \right)}{ x^2 {}_2F_1\left( \alpha, \alpha + \alpha_1+1, 2\alpha + \alpha_1+\alpha_2 + 2; x^{-1} \right)}\, \nonumber \\
& = -
{{}_2 F_1 (\alpha+1, \alpha + \alpha_1 + 1, 2 \alpha + \alpha_1 + \alpha_2 + 2;1/x) \over x \,
{}_2 F_1 (\alpha, \alpha + \alpha_1 + 1, 2 \alpha + \alpha_1 + \alpha_2 + 2;1/x)}.
\end{align}
This latter form was given recently by Trinh and Trinh \cite{TT20}, using a different set of ideas stemming from the theory of associated Jacobi polynomials \cite{Wi87}, and making no direct use of differential equations.

 Since analogous to (\ref{v1}) and (\ref{v1L})
\begin{equation}\label{JT2}
W_1^{0, \rm J}(x)
= {1 \over x} \sum_{p=0}^\infty {m_{p,0}^{\rm J} \over x^p }, \qquad m_{p,0}^{\rm J} = \int_{0}^1 x^p  \rho_{(1),0}^{\rm J}(x;\alpha_1,\alpha_2,\alpha)  \, dx,
\end{equation}
we can use (\ref{MPJ1}) (with the help of computer algebra) to compute $\{m_{p,0}^{\rm J}\}_{p=1}^\infty$, at least for small $p$. Agreement with the leading order (in $N$) rational functions known from (\ref{5.2}) is found.

It is furthermore the case that substitution of (\ref{JT2}) in (\ref{eq:Jac_N_order}) implies a recurrence for $\{ m_{p,0}^{\rm J}\}_{p=1}^\infty$. Thus we find
\begin{equation}\label{Jr1}
m_{p,0}^{\rm J} = {1 \over p + 1 + \alpha_1 + \alpha_2 + 2 \alpha} \Big ( (1 + \alpha_1  + \alpha) - \alpha_2 \sum_{s=1}^{p-1} m_{s,0}^{\rm J} - \alpha \sum_{s=1}^{p-1} m_{s,0}^{\rm J} m_{p-s,0}^{\rm J} \Big ),
\end{equation}
valid for $p=1,2,\dots$ and subject to the initial condition $m_{0,0}^{\rm J} = 1$. We can verify that iterating for small $p$ ($p=1,2$) reproduces the leading order terms from (\ref{5.2}), and is thus in agreement with (\ref{MPJ1}).

\begin{remark}
1.~Moving the denominator in the RHS of (\ref{Jr1}) to the LHS, replacing $p$ by $p+1$, then subtracting from the form without this latter replacement shows
\begin{equation}\label{Jr1}
m_{p+1,0}^{\rm J} = {1 \over p + 2 + \alpha_1 + \alpha_2 + 2 \alpha} \Big ( (p+ 1 + \alpha_1  ) m_{p,0}^{\rm J}
-  \alpha \sum_{s=1}^{p} m_{s,0}^{\rm J} m_{p+1-s,0}^{\rm J}
+ \alpha \sum_{s=0}^{p} m_{s,0}^{\rm J} m_{p-s,0}^{\rm J} \Big ).
\end{equation}
This recurrence was obtained recently in the work
\cite[Eq.~(15)]{TT20}, which as in the derivation of
(\ref{MPJ1}) in that work uses a different set of ideas. \\
2.~Changing variables $x = (X + 1)/2$ in (\ref{eq:Jac_N_order}) shows
\begin{equation}
		\label{Vd}
		\begin{split}
		    		-{d \over d X} W_1^{0,\rm J^*}(X) +\left(\frac{\alpha_2}{1-X} -\frac{\alpha_2}{1+X}\right)W_1^{0,\rm J^*}(X) + \frac{1}{1 - X^2}&\left(1+\alpha_1+\alpha_2+\alpha \right) \\ & + \alpha \left(W_1^{0,\rm J^*}(X)\right)^2 = 0,
		\end{split}
	\end{equation}
	where
	$$
	W_1^{0,\rm J^*}(X) = \int_{-1}^1 {\rho_{(1),0}^{\rm J^*}(Y;\alpha_1,\alpha_2,\alpha) \over X - Y} \, dY.
	$$
	Here $\rho_{(1),0}^{\rm J^*}$ denotes the density for the Jacobi $\beta$-ensemble with high temperature scaling (\ref{0.8}) relating to the weight
	$(1-X)^{\alpha_1} (1 + X)^{\alpha_2}$ supported on $(-1,1)$. In the case $\alpha_1 = \alpha_2 = a$ (symmetric Jacobi weight) the corresponding moments, as for the Gaussian ensemble, must vanish for $p$ odd. It follows from (\ref{Vd}) that the even moments $\{m_{2p,0}^{\rm J^*} \}$ satisfy the recurrence
	\begin{equation}
	\label{Vd1}
	m_{2p,0}^{\rm J^*} = {1 \over 2p + 2 \alpha + 2 a + 1}
	\Big ( (1+\alpha) - 2 a \sum_{s=1}^{p-1} m_{2s,0}^{\rm J^*} -
	\alpha \sum_{s=1}^{p-1} m_{2s,0}^{\rm J^*}
	m_{2(p-s),0}^{\rm J^*} \Big ),
	\end{equation}
	valid for $p=1,2,\dots$ with initial condition $m_{0,0}^{\rm J^*} = 1$.
	Moments for the symmetric Jacobi  $\beta$-ensembles, with $\beta = 1,2$ or 4 have been the subject of the recent work \cite{FR20}. In fact a number of recent works in random matrix theory have identified recurrences for moments and also distribution functions; see e.g. \cite{Ku19,FK19,CMOS18,ABGS20,RF19,GGR20,GGR20b,FK20a,FK20b}.
	
	Manipulation of (\ref{Vd1}) as in the derivation of (\ref{Jr1}) shows
	\begin{equation}
	\label{JG}
	m_{2(p+1),0}^{\rm J^*} = {1 \over 2p + 2 \alpha + 2 a + 3}
	\Big ( (2p + 1)m_{2p,0}^{\rm J^*} 
	-\alpha \sum_{s=1}^{p} m_{2s,0}^{\rm J^*}m_{2(p+1-s),0}^{\rm J^*}
	 +
	\alpha \sum_{s=0}^{p} m_{2s,0}^{\rm J^*}
	m_{2(p-s),0}^{\rm J^*} \Big ).
	\end{equation}
Scaling $x \mapsto x/\sqrt{2a}$ in the definition of the symmetric moment $m_{2k,0}^{\rm J^*}$	shows
$$
m_{2k,0}^{\rm J^*} = {1 \over (2 a)^{k+1/2}}
\int_{-\sqrt{a}}^{\sqrt{a} }
x^{2k} 
\rho_{(1),0}^{\rm J^*}(x/\sqrt{2a};a,\alpha) \, dx \sim{1 \over (2 a )^{k + 1/2} } m_{2k,0}^{{\rm G}},
$$
where the asymptotic relation follows from the elementary limit $(1 - x^2/2 a)^a \to e^{- x^2}$ as $x \to \infty$. Using this to equate leading order terms in (\ref{JG}) reclaims (\ref{v2}).
\end{remark}

The work \cite[Th.~A.4]{TT20} also contains an explicit formula for the density $\rho_{(1),0}^{\rm J}$. This is derived not from (\ref{eq:psichange}) and an inversion formula analogous to (\ref{u4}), but rather by using theory relating to the asymptotic of associated Jacobi polynomials \cite{IM91} and general relations between tridiagonal matrices and orthogonal polynomials
\cite{Ne79}. With
\begin{align*}
  & U(x) = {\Gamma (\alpha + 1) \Gamma (\alpha_1 + 1) \over \Gamma (1 + \alpha + \alpha_1)} \, 
    {}_2 F_1 (\alpha, -\alpha - \alpha_1 -\alpha_2 - 1,  -\alpha_1 ;x), \\
    & V(x) = {- \pi \alpha \Gamma(\alpha + \alpha_1 +\alpha_2 + 2) \over \sin ( \pi \alpha_1) \Gamma(1 + \alpha + \alpha_2) \Gamma (2 + \alpha_1)}(1 - x)^{1 + \alpha_2}
    x^{1 + \alpha_1} \,
 {}_2 F_1 (1-\alpha, 2+\alpha + \alpha_1 +\alpha_2 ,  2+\alpha_1 ;x),  
\end{align*}
we read off from \cite{TT20} that
\begin{equation}\label{JT3}
\rho_{(1),0}^{\rm J}(x;\alpha_1,\alpha_2,\alpha) =
{\Gamma(\alpha+1) \Gamma(\alpha + \alpha_1 + \alpha_2 + 2) \over \Gamma(\alpha+\alpha_1+1) \Gamma(\alpha 
+\alpha_2 + 1) } 
{x^{\alpha_1} (1 - x)^{\alpha_2} \over 
| U(x) + e^{\pi i \alpha_1} V(x) |^2},
\end{equation}
supported on $0 < x < 1$.

Knowledge of (\ref{eq:psichange}) and (\ref{MPJ1}), together with the inversion formula 
\begin{equation}\label{Jd0}
\rho_{(1),0}^{\rm J}(x;\alpha_1,\alpha_2,\alpha) =
\lim_{\epsilon \to 0^+} {1 \over \pi}
{\rm Im} \, W_1^{0, \rm J}(x - i \epsilon),
\end{equation}
can in fact be used to derive (\ref{JT3}). The starting point is to make use of the connection formula
\cite[\S 15.10(ii)]{DLMF} 
$$
e^{\pi i \alpha} \psi(x) = U(x) + e^{-\pi i \alpha_1} V(x).
$$
Substituting in (\ref{eq:psichange}), then substituting the result in (\ref{Jd0}) shows
\begin{equation}\label{Jd1}
\rho_{(1),0}^{\rm J}(x;\alpha_1,\alpha_2,\alpha) = C_2
{ u'(x) v(x) - v'(x) u(x) \over
|U(x) + e^{-\pi i \alpha_1} V(x)|^2 }
\end{equation}
where
\begin{equation}\label{Jd2}
C_2 = - {1 \over (\alpha_1 + 1)}
{\Gamma(\alpha+1) \Gamma(\alpha + \alpha_1 + \alpha_2 + 2) \over \Gamma(1 + \alpha + \alpha_1) \Gamma(1 + \alpha + \alpha_2)}
\end{equation}
and
$$
u(x) = {}_2 F_1(a,b,c;x), \qquad 
v(x) = x^{1 - c} \, {}_2 F_1(a-c+1,b-c+1,2-c;x)
$$
with
\begin{equation}\label{Jd3}
a = \alpha, \qquad b = - (\alpha + \alpha_1 + \alpha_2 + 1), \qquad c = - \alpha_1.
\end{equation}
Here $u(x), v(x)$ satisfies the same hypergeometric differential equation. We can use this to show
\begin{equation}\label{Jd4}
u'(x) v(x) - v'(x) u(x) = (c-1) x^a (1 - x)^{c - a - b - 1}.
\end{equation}
Substituting (\ref{Jd4}) with parameters given by (\ref{Jd3}) in (\ref{Jd1}) we reclaim (\ref{JT3}).

\begin{remark}
From the relationship between the Jacobi and Laguerre weights we must have
\begin{equation}\label{Jd5}
\lim_{\alpha_2 \to \infty} {1 \over \alpha_2}
\rho_{(1),0}^{\rm J}(x/\alpha_2;\alpha_1,\alpha_2,\alpha)=
\rho_{(1),0}^{\rm L}(x;\alpha_1,\alpha).
\end{equation}
Starting from (\ref{JT3}), and upon making sue of standard asymptotics for the gamma function and the hypergeometric function confluent limit formula
$$
\lim_{b \to \infty} \, {}_2 F_1 (a,b,c;x/b) = {}_1 F_1 (a,c;x)
$$
we see that
\begin{equation}\label{Ld3}
\lim_{\alpha_2 \to \infty} {1 \over \alpha_2}
\rho_{(1),0}^{\rm J}(x/\alpha_2;\alpha_1,\alpha_2,\alpha)=
{\Gamma(\alpha + 1) \over \Gamma(\alpha + \alpha_1 + 1)}
{x^{\alpha_1} e^{-x} \over | \tilde{U}(x) + e^{\pi i \alpha_1} \tilde{V}(x) |^2},
\end{equation}
where
\begin{align*}
\tilde{U}(x) & = {\Gamma(\alpha+1) \Gamma(\alpha_1+1) \over  \Gamma(1 + \alpha + \alpha_1)} \,
{}_1 F_1(\alpha,-\alpha_1;-x) \\
\tilde{V}(x) & = - {\pi \alpha \over\sin(\pi \alpha_1) \Gamma(2 + \alpha_1)}
x^{1 + \alpha_1} e^{-x} \,
{}_1 F_1(1-\alpha,2+\alpha_1;x).
\end{align*}
We see that (\ref{Ld3}) is 
 consistent with (\ref{uLd}) if it is true
 $$
 {1 \over \Gamma(\alpha + 1)}
 | \tilde{U}(x) + e^{\pi i \alpha_1} \tilde{V}(x) | =
 x^{\alpha_1/2} e^{-x/2} |W_{-\alpha-\alpha_1/2,
 (1 + \alpha_1)/2}(-x) |
 $$
 By writing the Whittaker function in terms of the Tricomi hypergeometric function, then writing the latter in terms of the confluent hypergeometric function (see \cite[below Lemma 2.1]{TT19}) this is indeed seen to be valid.
\end{remark}

Coming back to the loop equation for the Jacobi ensemble,
applying \eqref{4.5} with $n=2$ and equating the terms of ${\rm O}(N)$ we get a partial differential  equation for $W_2^{0,J}$ in terms of $W_1^{0,J}$,
	\begin{multline}
	\label{eq:Jac_2_N_order}
		-{\partial \over \partial x_1}W_2^{0,\rm J}(x_1,x_2)  + \left(\frac{\alpha_1}{x_1} -\frac{\alpha_2}{1-x_1}\right)W_2^{0,\rm J}(x_1,x_2)  -\frac{1}{x_1(1-x_1)}\Big (
		1 + x_2 {\partial \over \partial x_2} \Big )
		W_1^{0,\rm J}(x_2)
		\\  + {\partial \over \partial x_2} \left\{ \frac{W_1^{0,\rm J}(x_1) - W_1^{0,\rm J}(x_2)}{x_1-x_2}   + \frac{W_1^{0,\rm J}(x_2)}{x_1}\right\} + 2\alpha W_2^{0,\rm J}(x_1,x_2)W_1^{0,\rm J}(x_1) = 0\, .
\end{multline}
Introducing 
\begin{equation}\label{v3L}
W_2^{0, \rm J}(x_1,x_2) = {1 \over x_1 x_2}  \sum_{p,q=1}^\infty   {   \mu_{(p,q),0}^{\rm J}  \over x_1^p x_2^q},
\qquad
 \mu_{(p,q),0}^{\rm J}  =    \lim_{N \to \infty \atop \beta = 2 \alpha/ N} {\rm Cov} \, \Big ( \sum_{i=1}^N x_i^p, \sum_{i=1}^N x_i^q \Big )^{\rm J}.
 \end{equation}
Proceeding as in the derivation of (\ref{vf}) and (\ref{v4L}) shows
\begin{align}\label{v4J}
\mu_{(p,q),0}^{\rm J} & = 
{1 \over (p+ \alpha_1 + \alpha_2 + 2 \alpha + 1) }
\Big (  q(m_{q,0}^{\rm J} -  m_{p+q,0}^{\rm J} )
-\alpha_2 \sum_{s=1}^{p-1} \mu_{(s,q),0}^{\rm J}
- 2 \alpha \sum_{s=1}^{p-1} m_{s,0}^{\rm J} \mu_{(p-s,q),0}^{\rm J} \Big ).
\end{align}
Note that this is consistent with the requirement that $
\mu_{(0,q),0}^{\rm J} = \mu_{(p,0),0}^{\rm J} = 0$. Beyond this, the simplest case is $p=1$ which gives
\begin{equation}\label{v4Ja}
\mu_{(1,q),0}^{\rm J} = {q \over 
( \alpha_1 + \alpha_2 + 2 \alpha + 2)}
(m_{q,0}^{\rm J} -  m_{p+1,0}^{\rm J} ).
\end{equation}
Thus, for example, making use of knowledge of $m_{1,0}^{\rm J}$ and $m_{2,0}^{\rm J}$ as implied by (\ref{Jr1}), or as can be read off from (\ref{5.2}), we have
\begin{equation}\label{v4Jb}
\mu_{(1,1),0}^{\rm J} = {(1 + \alpha + \alpha_1)
(1 + \alpha + \alpha_2)(2 + \alpha + \alpha_1 + \alpha_2) \over
(2 + 2\alpha + \alpha_1 + \alpha_2)^3(3 + 2\alpha + \alpha_1 + \alpha_2)}.
\end{equation}
We remark that iterating (\ref{v4J}) with the help of computer algebra, we can check the required symmetry
$\mu_{(p,q),0}^{\rm J} = \mu_{(q,p),0}^{\rm J}$ in low order cases.

Finally, we return to the Jacobi ensemble loop equation (\ref{4.5}) in the case $n=1$. With 
$\beta = 2 \alpha/N$ and the ansatz corresponding to (\ref{3.1}), equating the terms of order ${\rm O}(1)$ gives the equation relating
 $W_1^{0,\rm J}, W_1^{1,\rm J}, W_2^{0,\rm J}$ (the latter at coincident points)
\begin{multline}
		\label{eq:Jac_1_order}
					\alpha \derivative{x} W_1^{0,\rm J}(x) - \derivative{x} W_1^{1,\rm J}(x_1) + \left(\frac{\alpha_1}{x} -\frac{\alpha_2}{1-x}\right)W_1^{1,\rm J}(x)  - {\alpha \over x (1 - x)} \\ + \alpha W_2^{0,\rm J}(x,x) + 2 \alpha W_1^{0,\rm J}(x) W_1^{1,\rm J}(x) = 0.
	\end{multline}
In relation to $W_2^{0,J}(x,x)$ herein, letting $x_1\to x_2$ and redefining  $x_2 = x$ in \eqref{eq:Jac_2_N_order} shows
		\begin{multline}\label{eq:Jac_2_N_order_onevariable}
			-\frac{1}{2}\derivative{x}W_2^{0,\rm J}(x,x) + \left(\frac{\alpha_1}{x} -\frac{\alpha_2}{1-x}\right)W_2^{0,\rm J}(x,x) -\frac{W_1^{0,\rm J}(x)}{x(1-x)} - {1 \over 1-x}\derivative{x}W_1^{0,\rm J}(x)\\  + {1 \over 2} {d^2 \over dx^2} W_1^{0,\rm J}(x) + {1 \over x} \derivative{x}W_1^{0,\rm J}(x) + 2\alpha W_2^{0,\rm J}(x,x)W_1^{0,\rm J}(x) = 0\, .
\end{multline} 
As with (\ref{w1}) and (\ref{w1L}),
introducing the expansion about $x = \infty$
\begin{equation}\label{w1J}
  W_2^{0,\rm J}(x,x) = {1 \over x^2} \sum_{p=1}^\infty {\tilde{\mu}_{p,0}^{\rm J} \over x^{p}}, \qquad
  \tilde{\mu}_{p,0}^{\rm J}  = \sum_{p_1 + q_1 = p} \mu_{(p_1,q_1),0}^{\rm J},
 \end{equation}  
together with the analogous expansion of $W_1^{0,\rm J}$ from (\ref{JT2}), we obtain from
\eqref{eq:Jac_2_N_order_onevariable}
 the recurrence
\begin{equation}\label{w2J}
\tilde{\mu}_{p,0}^{\rm J} = {1 \over  \alpha_1 + \alpha_2 +2 \alpha + 1 + p/2} \bigg ( \sum_{s=1}^{p-1} s m_{s,0}^{\rm J} - {(p-1)p \over 2} m_{p,0}^{\rm J} 
-\alpha_2 \sum_{s=1}^{p-1} \tilde{\mu}_{s,0}^{\rm J}
-2 \alpha\sum_{s=1}^{p-1} \tilde{\mu}_{s,0}^{\rm J} m_{p-s,0}^{\rm J} \bigg ).
\end{equation} 
With the help of computer algebra, we can check in low order cases that the sequence $\{\tilde{\mu}_{p,0}^{\rm J} \}_{p=1,2,\dots}$ generated by this recurrence is consistent with its relationship to 
$\{ \mu_{(p_1,q_1),0}^{\rm J} \}$ as implied by the second equation in (\ref{w1J}).

In (\ref{eq:Jac_1_order}) we have now have
$\{m_{p,0}^{\rm J} \}$ determined by 
the recurrence (\ref{Jr1}),
and $\{ \tilde{\mu}_{p,0}^{\rm J}\}$ determined by the recurrence (\ref{w2J}). Now introducing the expansion
\begin{equation}\label{y1L}
  W_1^{1,\rm J}(x \hbox{{\sout{$x$}}}) = {1 \over x} \sum_{p=2}^\infty {m_{p,1}^{\rm J} \over x^{p}}
  \end{equation}
  we see  that $\{m_{p,1}^{\rm J}\}$ can be determined by the recurrence
\begin{equation}\label{y2L}  
m_{p,1}^{\rm J} = {1 \over p + 1 + \alpha_1 + \alpha_2 +2 \alpha}
\bigg ( \alpha (p+1) m_{p,0}^{\rm J} 
- \alpha (\tilde{\mu}_{p,0} + 1) - 2 \alpha \sum_{s=1}^{p-1} m_{s,1}^{\rm J} m_{p-s,0}^{\rm J}
- \alpha_2 \sum_{s=1}^{p-1} m_{s,1}^{\rm J} \bigg ),
\end{equation}
valid for $p=1,2,\dots$ with initial condition $m_{0,1}^{\rm J}=0$. By the aid of computer algebra, it can be checked that (\ref{y2L}) correctly reproduces the values of $m_{p,1}^{\rm J}$ for $p=1$ and $p=2$ as implied by (\ref{5.2}).

\section{Application to Dyson's disordered chain}\label{S6}
\subsection{Anti-symmetric Gaussian $\beta$-ensemble in the high temperature regime}
 
Starting with the work \cite{DE02}, it has been known how to construct random tridiagonal matrices whose eigenvalue probability density function realises the classical $\beta$ ensembles and thus have functional form given by (\ref{1.2}) for appropriate $w(x)$. A systematic discussion in the context of the high temperature regime as specified by the relation (\ref{0.8}) is given in \cite{Ma20}. Our interest for subsequent application is a particular tridiagonal anti-symmetric matrix that gives rise to a variant of (\ref{1.2}) involving the Laguerre weight, but with squared variables. This is the anti-symmetric Gaussian $\beta$-ensemble introduced in \cite{DF10}. With  $\tilde{\chi}_k$ denoting the square root of the gamma distribution $\Gamma[k/2,1]$, the latter random tridiagonal matrix is specified by  
with entries directly
  above the diagonal being distributed by  
   \begin{equation}\label{A1+}   
  (\tilde{\chi}_{(N - 1)\beta/2}, \tilde{\chi}_{\beta(N - 2)/2},\dots, \tilde{\chi}_{\beta/2}).
   \end{equation}
It was shown in \cite{DF10} that the eigenvalue PDF can be explicitly determined, with the precise functional
  form depending  on the parity of $N$. 
  Replacing $N$ by $2N+1$ so the size of the matrix is odd, there is one
  zero eigenvalue, with the remaining eigenvalues coming in pairs $\{ \pm i x_j \}_{j=1}^N$, $x_j > 0$.
  Their squares $x_j^2 =: y_j$ are
  distributed according to the PDF proportional to 
   \begin{equation}\label{A1}   
   \prod_{l=1}^N y_l^{3\beta/4 - 1} e^{- y_l} \prod_{1 \le j < k \le N} | y_k - y_j|^\beta,
   \end{equation} 
   and is thus an example of the Laguerre $\beta$-ensemble with $\alpha_1 = 3\beta/4 - 1$.

As observed in the recent work \cite{Fo21}, it follows from the theory of the Laguerre $\beta$-ensemble 
with high temperature scaling (\ref{0.8}) that the anti-symmetric Gaussian $\beta$-ensemble too permits a well defined high temperature limit specified by the scaling (\ref{0.8}) {with $\alpha >0$}. Specifically, {taking the limit of (\ref{uLd}) for  $\alpha_1 \to -1$, we get that that} the limiting density of the squared eigenvalues is given in terms of a particular Whittaker function according to
\begin{equation}\label{uLdplus1}
\rho_{(1),0}^{\text{(a-s)}^2}(y;\alpha) =
{1 \over \Gamma(\alpha + 1) \Gamma(\alpha)}
{1 \over | W_{-\alpha + 1/2,0}(-y)|^2},
\end{equation}
supported on $y > 0$. This relates to the density of the eigenvalues themselves (i.e.~without squaring) by the simple relation
\begin{equation}\label{uLd+2}
\rho_{(1),0}^{\text{a-s}}(x;\alpha) =
2 x \rho_{(1),0}^{\text{(a-s)}^2}(x^2;\alpha).
\end{equation}
In particular, combining this with (\ref{Lk1}) shows
\begin{equation}\label{uLd+3}
\int_0^\infty x^2 \rho_{(1),0}^{\text{a-s}}(x;\alpha) \, dx =
\int_0^\infty y \rho_{(1),0}^{\text{(a-s)}^2}(y ;\alpha) \, dy = \alpha.
\end{equation}

The result (\ref{uLdplus1}) in the form implied by (\ref{uLd+2}) it is illustrated through numerical simulation of the eigenvalue density of the 
anti-symmetric Gaussian $\beta$-ensemble scaled by (\ref{0.8}) in Figure \ref{fig:simulations}. To tabulate (\ref{uLdplus1}) for $\alpha \not\in \Z$, use is made of the connection formula for the Whittaker function
\begin{equation}
    W_{k,\mu}(z) = -\frac{W_{-k,\mu}(-z)\Gamma(\frac{1}{2} +\mu +k)}{\Gamma(\frac{1}{2} +\mu -k)}e^{-\left( \mu +\frac{1}{2}\right)\pi i} + \frac{\Gamma(\frac{1}{2} +\mu  +k)e^{k\pi i}}{\Gamma(1 +2\mu)}M_{-k,\mu}(-z), 
    \end{equation}
where $M_{k,\mu}(z)$ is the second solution of the Whittaker equation \cite[\S 13.14(i)]{DLMF}.

\begin{figure}
    \centering
    \includegraphics[scale=0.4]{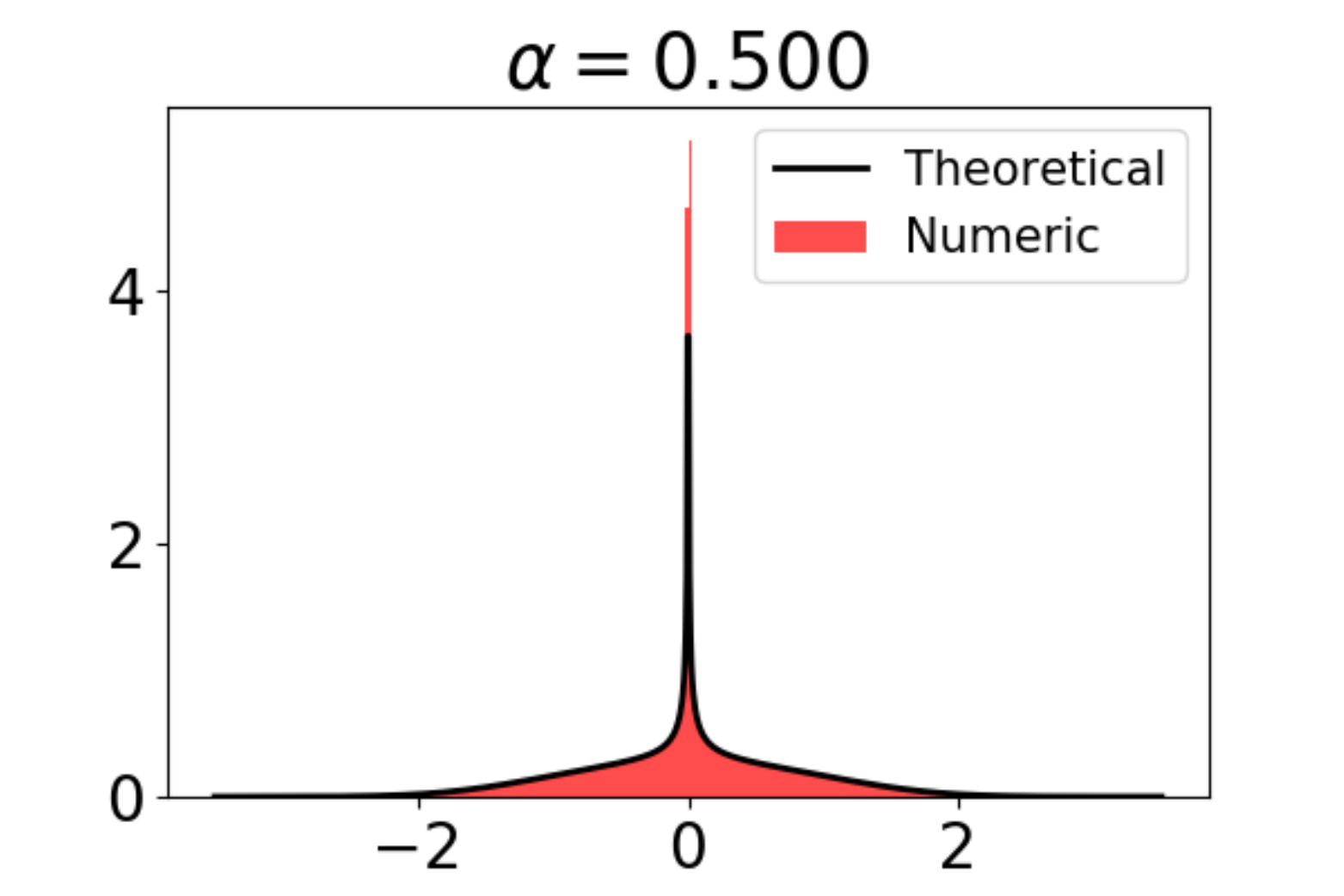}    \includegraphics[scale=0.4]{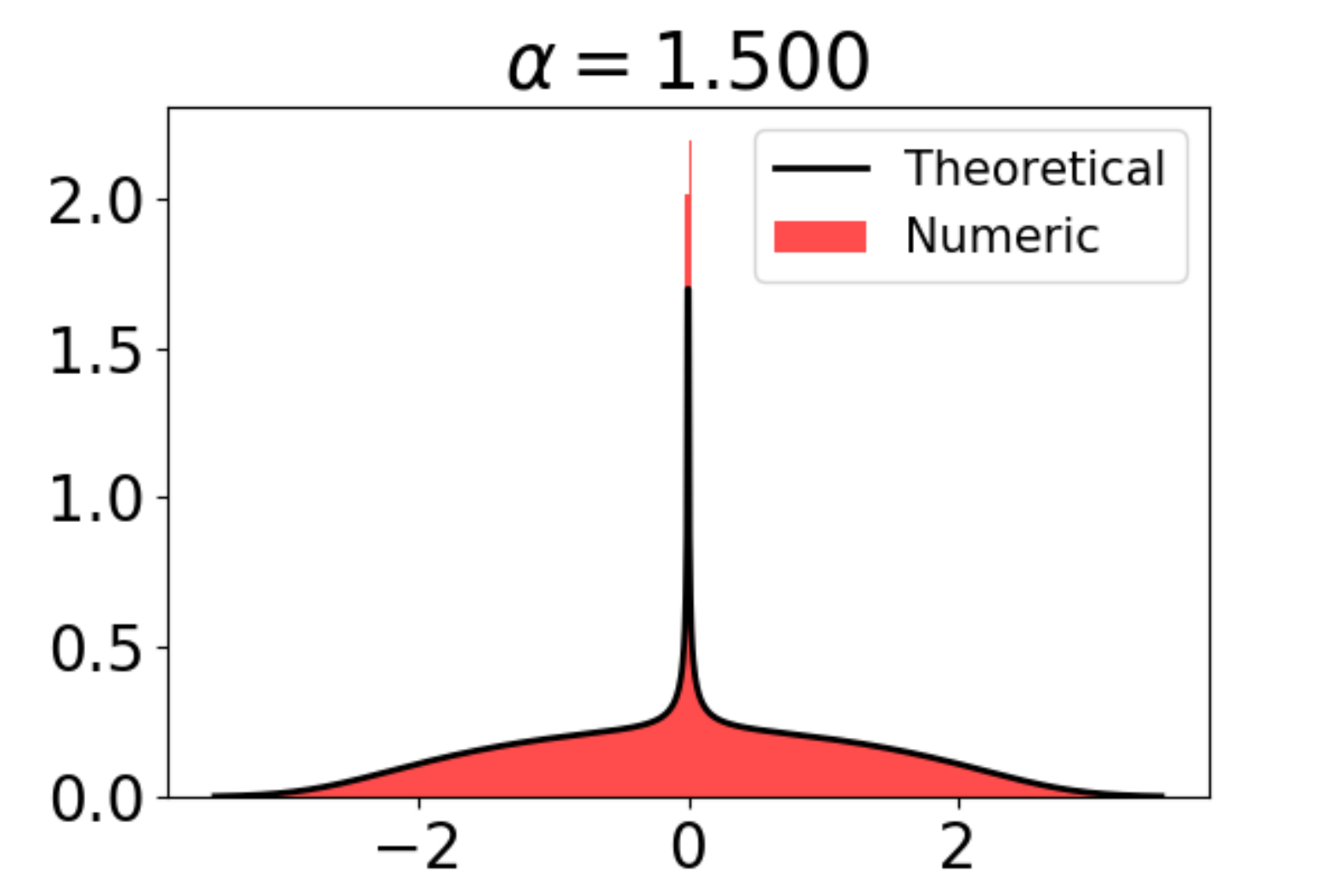}
    \includegraphics[scale=0.4]{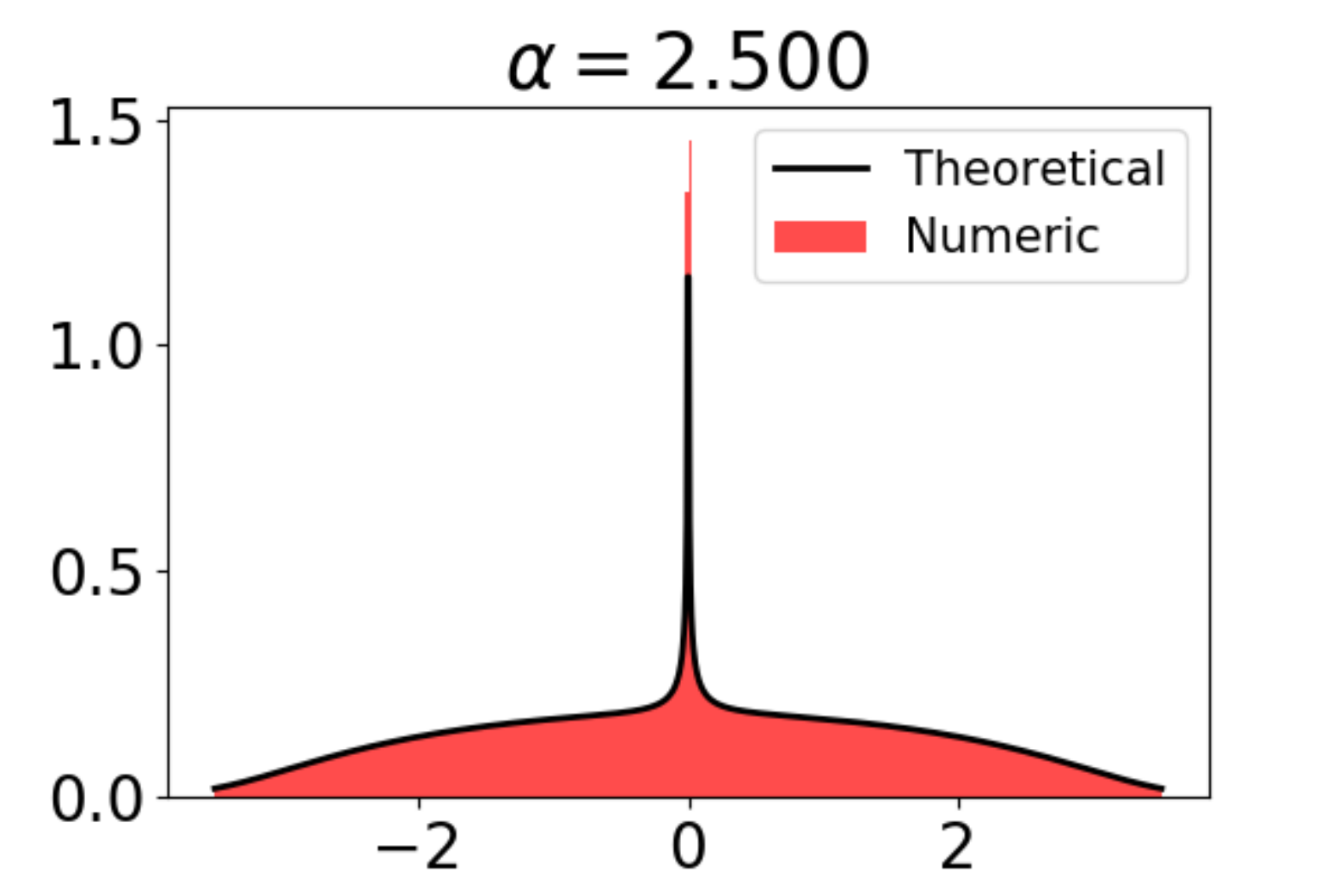}
    \includegraphics[scale=0.4]{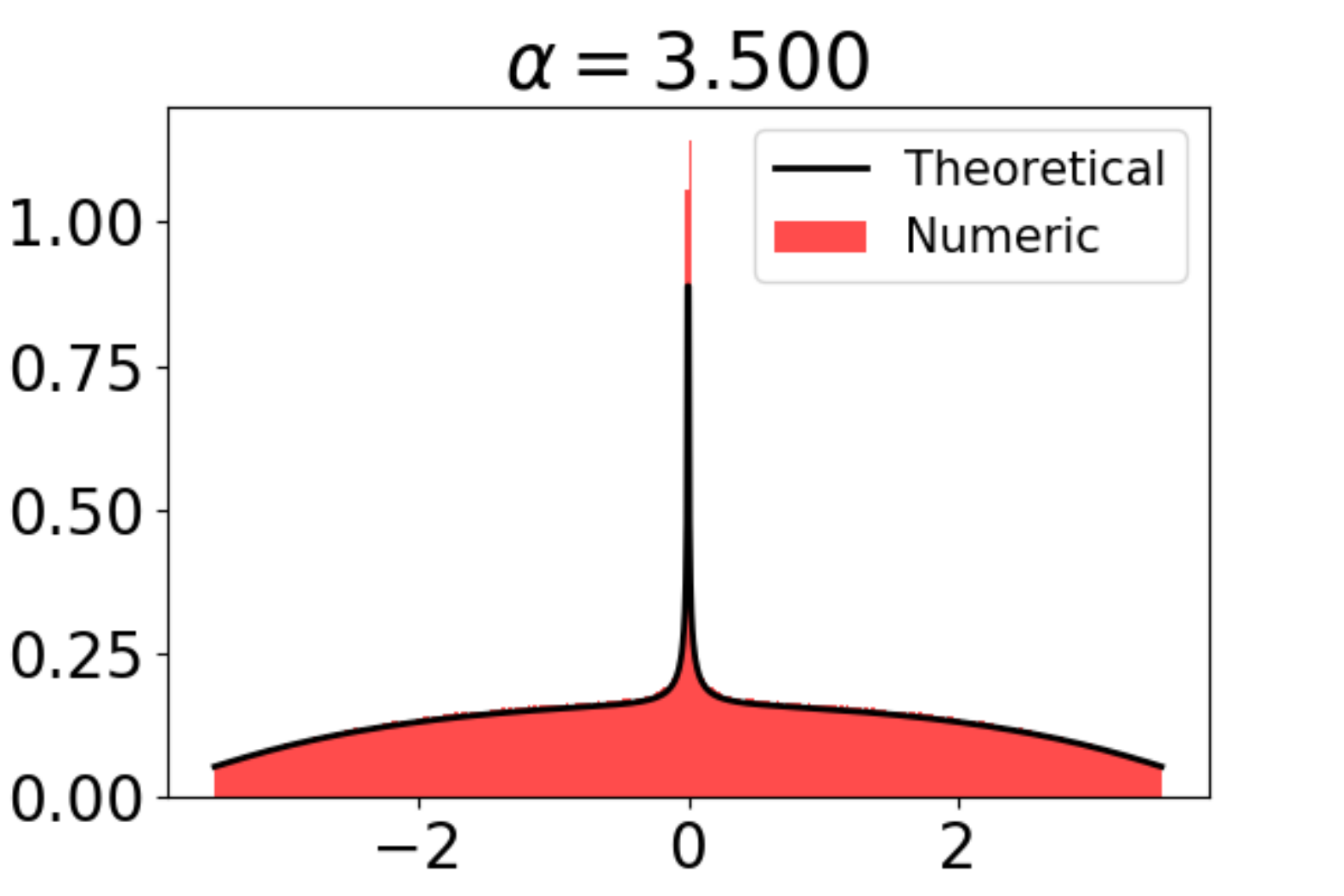}
    \caption{Simulation of anti-symmetric Gaussian $\beta$-ensemble density of states in the high temperature regime, $n=5000$, $\mbox{trials}=500\,$. {The density smoothly goes to zero outside the plotted area.}}
    \label{fig:simulations}
\end{figure}
\subsection{Anti-symmetric Gaussian $\alpha$-ensemble}
We define the anti-symmetric Gaussian $\alpha$-ensemble, {with $\alpha >0$,} in analogy with the other $\alpha$-ensembles defined in the recent work \cite{Ma20}. This is done by noting that in the high temperature regime the entries in the top left corner of the tridiagonal realisation of the classical $\beta$-ensembles are to leading order independent of the row and thus i.i.d. The prescription then is to construct a random tridiagonal matrix with these random variables. In the case of the anti-symmetric Gaussian $\beta$-ensemble, where the off diagonal distributions before the high temperature scaling (\ref{0.8}) are given by (\ref{A1+}), this gives for an element of anti-symmetric Gaussian $\alpha$-ensemble as the random $N \times N$ tridiagonal matrix
\begin{equation}
\label{eq:alphaense}
     A^\alpha_N = 	\begin{bmatrix}
		0 & \tilde{\chi}_{\alpha} \\
		-\tilde{\chi}_{\alpha} & 0 & \tilde{\chi}_{\alpha} \\
		&\ddots &\ddots &\ddots\\
		&& -\tilde{\chi}_\alpha & 0 & \tilde{\chi}_{\alpha}\\
		&&& -\tilde{\chi}_{\alpha} & 0 
	\end{bmatrix}.
\end{equation}
Here the entries below the diagonal are constrained to take on the appropriate values determined by the entries above the diagonal in accordance with the matrix being anti-symmetric; the entries above the diagonal are i.i.d.

Following the same idea as in \cite{Ma20}, the limiting mean spectral measure and mean density of states can be determined.

\begin{theorem}
\label{thm:antialpha}
Consider the matrix $A_N^{\alpha}$ in \eqref{eq:alphaense}, $\alpha \in \mathbb R^+$, then the mean spectral measure of $A_N^{\alpha}$ has density $\rho_{(1),0}^{\text{a-s}}(x;\alpha)$ as specified
in \eqref{uLdplus1} and \eqref{uLd+2},
and the mean density of states of $A_N^{\alpha}$ has density $ \mu_\alpha^{\text{a-s}}$ where
\begin{equation}
     \mu_\alpha^{\text{a-s}}(x) = {\partial \over \partial \alpha} (\alpha  \rho_{(1),0}^{\text{a-s}}(x;\alpha) ).
\end{equation}
Consequently, with $\mu_\alpha^{\text{(a-s)}^2}(y)$ the density in squared variables, $y = x^2$, we have
\begin{equation}\label{dc1}
\mu_\alpha^{\text{(a-s)}^2}(y) = 
{\partial \over \partial \alpha} \bigg (
{1 \over | \Gamma(\alpha) W_{-\alpha + 1/2,0}(-y)|^2}
\bigg ).
\end{equation}
\end{theorem}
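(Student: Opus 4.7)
The plan is to adapt the strategy used for the classical $\alpha$-ensembles in \cite{Ma20} to the anti-symmetric setting, and combine it with the explicit form (\ref{uLdplus1}) for the limiting spectral density.

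\emph{Mean spectral measure.} I would first show that the mean spectral measure of $A_N^\alpha$ at position $1$ converges to $\rho_{(1),0}^{\text{a-s}}(x;\alpha)$. The idea is a local matching: the off-diagonal entry at position $k$ of the anti-symmetric Gaussian $\beta$-ensemble tridiagonal in the scaling $\beta = 2\alpha/N$ is $\tilde{\chi}_{(1-k/N)\alpha}$, which for each fixed offset $k$ from the top converges in distribution to the i.i.d.\ $\tilde{\chi}_\alpha$ used in $A_N^\alpha$. Since the mean spectral measure at position $1$ of a tridiagonal matrix is encoded in the Stieltjes continued-fraction expansion of $\langle e_1, (xI - A)^{-1} e_1\rangle$, which is stable under modification of entries further down whose parameters tend to zero, the two mean spectral measures at position $1$ share the same $N \to \infty$ limit. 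In the $\beta$-ensemble, independence of the Dirichlet-distributed eigenvector components from the eigenvalues implies that the mean spectral measure at position $1$ equals the mean density of states, which in the scaling limit is precisely $\rho_{(1),0}^{\text{a-s}}(x;\alpha)$, as identified in (\ref{uLdplus1})--(\ref{uLd+2}) via the Laguerre $\beta$-ensemble with $\alpha_1=-1$.

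\emph{Mean density of states.} To derive $\mu_\alpha^{\text{a-s}}(x) = \partial_\alpha(\alpha\,\rho_{(1),0}^{\text{a-s}}(x;\alpha))$, I would compare the mean DOS of the $\beta$-tridiagonal with the family of $\alpha$-ensembles of varying parameter. At position $k = \lfloor Nt\rfloor$ the $\tilde{\chi}$-parameters of the neighbouring entries of the $\beta$-tridiagonal are, to leading order, all equal to $(1-t)\alpha$, so the local structure is that of an $\alpha$-ensemble with parameter $(1-t)\alpha$ and the local mean DOS converges to $\mu_{(1-t)\alpha}^{\text{a-s}}(x)$. Averaging via $E\,\mathrm{Tr}\,f(A) = \sum_{k} E\langle e_k, f(A) e_k\rangle$ and passing to the limit gives
\begin{equation*}
\rho_{(1),0}^{\text{a-s}}(x;\alpha) \;=\; \int_0^1 \mu_{(1-t)\alpha}^{\text{a-s}}(x)\,dt \;=\; \frac{1}{\alpha}\int_0^\alpha \mu_c^{\text{a-s}}(x)\,dc.
\end{equation*}
Multiplying by $\alpha$ and differentiating in $\alpha$ yields the stated formula for $\mu_\alpha^{\text{a-s}}$. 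The squared-variable expression (\ref{dc1}) is then immediate from the change of variables $y = x^2$, the relation (\ref{uLd+2}), and the explicit form (\ref{uLdplus1}).

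\emph{Main obstacle.} The delicate step is the local-DOS approximation: one must rigorously justify that the slowly varying $\tilde{\chi}$-parameters (shifting by $O(1/N)$ per row) of the $\beta$-tridiagonal are effectively constant on the scale relevant to the local spectral measure. This amounts to a quantitative resolvent-stability estimate on the tridiagonal Jacobi continued fraction, analogous to what is carried out in \cite{Ma20} for the Gaussian and Laguerre $\alpha$-ensembles. A moment-based alternative would compute $\tfrac{1}{N} E\,\mathrm{Tr}\,A_N^{\,k}$ via closed-walk combinatorics on i.i.d.\ $\tilde{\chi}_\alpha$-weighted edges and compare term-by-term with the $\alpha$-derivative of $\alpha$ times the corresponding moments of $\rho_{(1),0}^{\text{a-s}}$, bypassing the continuum argument at the cost of more careful bookkeeping near the matrix edges.
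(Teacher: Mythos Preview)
Your proposal is correct and follows essentially the same route as the paper. The paper splits the proof into two lemmas: Lemma~\ref{lem:spectral} (local matching of the top corner of the $\beta$-tridiagonal with $A_N^\alpha$ to identify the mean spectral measure) and Lemma~\ref{lem:DOS} (the moment identity $w_\ell=\partial_\alpha(\alpha v_\ell)$, whose proof is deferred verbatim to \cite[Lemma~3.1--Corollary~3.2]{Ma20}). Your first paragraph is precisely Lemma~\ref{lem:spectral}; your second paragraph is a density-level sketch of the argument behind Lemma~\ref{lem:DOS}, routed through the $\beta$-tridiagonal to obtain the integral relation $\alpha\,\rho_{(1),0}^{\text{a-s}}(x;\alpha)=\int_0^\alpha \mu_c^{\text{a-s}}(x)\,dc$, which differentiates to the claim. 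The paper phrases this step in terms of moments rather than densities and simply cites \cite{Ma20}, but the content is the same, and you have correctly identified the one analytic point requiring care (stability of the local spectral measure under the $O(1/N)$ drift in the $\tilde\chi$-parameters).
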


We give a sketch of the proof, which in fact is a combination of two lemmas.

\begin{lemma}
\label{lem:spectral}
Consider the matrix $A_N^{\alpha}$ in \eqref{eq:alphaense}, $\alpha \in \mathbb R^+$, then the mean spectral measure of $A_N^{\alpha}$ has density 
$\rho_{(1),0}^{\text{a-s}}(x;\alpha)$ as specified
in \eqref{uLdplus1} and \eqref{uLd+2}.
\end{lemma}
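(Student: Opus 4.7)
The plan is to compute the moments of the mean spectral measure $\nu_N^\alpha$ of $A_N^\alpha$ at the cyclic vector $e_1$ and match them to those of $\rho_{(1),0}^{\text{a-s}}(\,\cdot\,;\alpha)$. By the tridiagonal structure the moments read $\int x^k\, d\nu_N^\alpha(x) = \mathbb{E}[(A_N^\alpha)^k_{11}]$; the anti-symmetry of $A_N^\alpha$ kills the odd moments, so it suffices to handle even $k$.

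First I would expand $(A_N^\alpha)^{2k}_{11}$ as a sum over closed nearest-neighbour walks on $\{1,\ldots,N\}$ of length $2k$ starting at vertex $1$. Such walks never leave $\{1,\ldots,k+1\}$, so for $N \ge k+1$ the expected value does not depend on $N$ and equals a fixed polynomial $m_{2k}(\alpha)$ in the moments of $\tilde{\chi}_{\alpha}$. Applying the same expansion to the tridiagonal realisation $T_{2N+1}^\beta$ of the anti-symmetric Gaussian $\beta$-ensemble with off-diagonal distribution (\ref{A1+}) and $\beta = 2\alpha/N$, the resulting expression depends only on moments of $\tilde{\chi}_{(N-j)\alpha/N}$ for $j \le k$. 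Since $\tilde{\chi}_\gamma^{\,2} \sim \Gamma(\gamma/2,1)$ has moments continuous in $\gamma$, these converge as $N \to \infty$ to those of $\tilde{\chi}_\alpha$, giving $\mathbb{E}[(T_{2N+1}^{2\alpha/N})^{2k}_{11}] \to m_{2k}(\alpha)$.

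The key step is then to identify this common limit with $\int x^{2k}\, \rho_{(1),0}^{\text{a-s}}(x;\alpha)\, dx$. For this I would invoke the Dumitriu--Edelman-type factorisation in the anti-symmetric variant (as constructed in \cite{DF10}): the eigenvalues of $T_{2N+1}^\beta$ realise the anti-symmetric Gaussian $\beta$-ensemble, and the squared moduli of the first components of the orthonormal eigenvectors form an independent, symmetrically Dirichlet-distributed weight vector. Consequently the mean spectral measure of $T_{2N+1}^\beta$ at $e_1$ coincides with the mean empirical eigenvalue distribution, whose $N \to \infty$ limit is $\rho_{(1),0}^{\text{a-s}}(x;\alpha)$ by (\ref{uLdplus1})--(\ref{uLd+2}). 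Since the target density inherits gamma-type tails in the squared variable from the Laguerre $\alpha_1 \to -1$ limit, Carleman's condition holds and matching moments determines the measure uniquely.

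The hard part will be precisely this Dumitriu--Edelman-type factorisation in the anti-symmetric setting: one needs the joint law of the eigenvalues and the first-eigenvector weights of $T_{2N+1}^\beta$ to factorise, with Dirichlet weights, so that the mean spectral measure at $e_1$ equals the mean eigenvalue density. The walk combinatorics and the continuity of $\tilde{\chi}_\gamma$-moments in $\gamma$ are otherwise routine, as is the appeal to Carleman.
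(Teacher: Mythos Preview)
Your proposal is correct and follows essentially the same route as the paper. The paper's proof is a two-line sketch: the top-left $\kappa\times\kappa$ block of the anti-symmetric $\beta$-ensemble tridiagonal matrix with $\beta=2\alpha/N$ converges weakly to that of $A_N^\alpha$, hence the two share the same limiting mean spectral measure, which by the preceding subsection is $\rho_{(1),0}^{\text{a-s}}(\,\cdot\,;\alpha)$. Your walk-combinatorics argument is exactly the moment-level unpacking of ``top-left blocks converge'', and the Dirichlet-weight factorisation and Carleman step you single out are precisely the ingredients the paper leaves implicit (deferring to \cite{Ma20} and \cite{DF10}); you have correctly located where the actual work sits.
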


\begin{proof}
Denote by $B_n(\beta)$ the top $n \times n$ sub-block of the random tridiagonal matrix specified by distribution of its leading diagonal (\ref{A1+}).
One just has to realise that for any fixed $\kappa\in \mathbb N$, $\kappa < n$ the $\kappa \times \kappa$ upper left block of $B_n(2\alpha/n)$ weakly converges to the corresponding one of $A_n^\alpha$. This implies that the two matrices have the same spectral measure, so applying the result of the previous subsection we get the claim.  
\end{proof}

\begin{lemma}
\label{lem:DOS}
Consider the matrix $A_N^{\alpha}$ in \eqref{eq:alphaense}, $\alpha \in \mathbb R^+$, let $v_\ell^{\text{a-s}}(\alpha)$ be the $\ell$-th moment of the mean spectral measure of  $A_N^{\alpha}$, and $w_\ell^{\text{a-s}}(\alpha)$ the $\ell$-th moment of the mean density of states. 
We have
\begin{equation}
    w_\ell^{\text{a-s}}(\alpha) = {\partial \over \partial \alpha} (\alpha v_\ell^{\text{a-s}}(\alpha)).
\end{equation}
Equivalently, with reference to the mean spectral measure, and mean density of states in squared variables
\begin{equation}\label{dc2}
    w_\ell^{\text{(a-s)}^2}(\alpha) = {\partial \over \partial \alpha} (\alpha v_\ell^{\text{(a-s)}^2}(\alpha)) =
   {\partial \over \partial \alpha} (\alpha m_{\ell,0}^{\rm L}) \,  \Big |_{\alpha_1 = -1}. 
\end{equation}
\end{lemma}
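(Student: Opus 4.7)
The plan is to express the density of states of the tridiagonal $\beta$-ensemble realisation $B_N$ (from \eqref{A1+} with $\beta=2\alpha/N$) in two different ways: one matches the spectral measure of $A_N^\alpha$ via Lemma \ref{lem:spectral}, and the other decomposes into a continuum of $\alpha$-ensemble DOS. Comparing the two will yield the derivative identity, in parallel with the strategy used in \cite{Ma20} for the other classical $\alpha$-ensembles.

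First I would invoke the fact that for the Dumitriu--Edelman tridiagonalisation $B_N$ of the (anti-symmetric) Gaussian $\beta$-ensemble, the mean spectral measure at $e_1$ coincides with the mean density of states: the squared first components $(|v_j(1)|^2)$ of the eigenvectors follow a Dirichlet distribution with equal parameters and are independent of the eigenvalues, so $\mathbb{E}[|v_j(1)|^2]=1/N$ conditionally on $\lambda_j$, which yields $\mathbb{E}[\mu_{e_1}]=\mathbb{E}[\mathrm{DOS}]$. Combined with Lemma \ref{lem:spectral}, the $\ell$-th moment of the mean DOS of $B_N(2\alpha/N)$ therefore converges to $v_\ell^{(\text{a-s})^2}(\alpha)=m_{\ell,0}^{\rm L}|_{\alpha_1=-1}$, and analogously in unsquared variables.

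Next, I would recompute the same DOS moment by summing $\mathbb{E}[(B_N^\ell)_{jj}]$ over $j$ and exploiting the explicit position-dependent law of the off-diagonal entries: after the scaling $\beta=2\alpha/N$, the $k$-th off-diagonal entry of $B_N$ is $\tilde{\chi}_{\alpha(N-k)/N}$. For a bulk index $j\sim xN$ with $x\in(0,1)$, only entries within distance $\ell/2$ of $j$ contribute to $\mathbb{E}[(B_N^\ell)_{jj}]$, and these converge uniformly in distribution --- and in all the moments $\mathbb{E}[\tilde{\chi}_\gamma^{2k}]=(\gamma)_k$, which are continuous in $\gamma$ on $[0,\alpha]$ --- to $\tilde{\chi}_{\alpha(1-x)}$. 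Hence $\mathbb{E}[(B_N^\ell)_{jj}]\to w_\ell^{(\text{a-s})^2}(\alpha(1-x))$, by the very definition of the $\alpha$-ensemble DOS moment. A Riemann sum, whose $O(\ell/N)$ boundary correction vanishes, then gives
\begin{equation*}
v_\ell^{(\text{a-s})^2}(\alpha) \;=\; \int_0^1 w_\ell^{(\text{a-s})^2}\!\bigl(\alpha(1-x)\bigr)\,dx \;=\; \frac{1}{\alpha}\int_0^\alpha w_\ell^{(\text{a-s})^2}(\alpha')\,d\alpha'.
\end{equation*}

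Multiplying through by $\alpha$ and differentiating in $\alpha$ produces $w_\ell^{(\text{a-s})^2}(\alpha)=\partial_\alpha(\alpha\, v_\ell^{(\text{a-s})^2}(\alpha))=\partial_\alpha(\alpha\, m_{\ell,0}^{\rm L})|_{\alpha_1=-1}$, which is the second equality in \eqref{dc2}; the first equality follows from the relation $v_{2k}^{(\text{a-s})}(\alpha)=(-1)^k v_k^{(\text{a-s})^2}(\alpha)$ (and analogously for $w$), which holds because the eigenvalues of $A_N^\alpha$ are purely imaginary. The main technical obstacle is the passage from the position-dependent distributions of $B_N$ to the i.i.d.\ $\tilde{\chi}_{\alpha(1-x)}$ bulk model; this step relies on finite propagation (walks of length $\ell$ involve only $O(\ell)$ entries) together with uniform control of the Pochhammer-symbol moments in $\gamma$, both of which are manageable once the set-up is in place.
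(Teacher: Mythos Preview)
Your proposal is correct and reconstructs precisely the argument of \cite[Lemma 3.1 -- Corollary 3.2]{Ma20} that the paper invokes by citation: the paper's own proof is a one-line reference to \cite{Ma20}, and you have supplied the underlying mechanism (equality of mean DOS and mean spectral measure for the $\beta$-ensemble tridiagonalisation, local convergence of the bulk of $B_N(2\alpha/N)$ to $A^{\alpha(1-x)}$, Riemann-sum identity $\alpha v_\ell(\alpha)=\int_0^\alpha w_\ell(\alpha')\,d\alpha'$, then differentiation). The only point worth tightening is the Dirichlet/independence property of the first eigenvector components for the \emph{anti-symmetric} tridiagonalisation from \cite{DF10}; this is the one place where the anti-symmetric case is not literally covered by the symmetric argument, but it follows from the Householder construction in \cite{DF10} in the same way.
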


\begin{proof}
The argument of \cite[Lemma 3.1 -- Corollary 3.2]{Ma20} is valid in this case too.
\end{proof}

\begin{proof}[Proof of Theorem \ref{thm:antialpha}]
The first part of the claim follows immediately from Lemma \ref{lem:spectral}. Regarding the second part of the claim, from Lemma \ref{lem:DOS} we have that, in the same notation as before,
\begin{equation}
    w_\ell^{\text{a-s}}(\alpha) = {\partial \over \partial \alpha }(\alpha v_\ell^{\text{a-s}}(\alpha))\,.
\end{equation}
This relation must carry over to relate
 the densities  $\mu_\alpha^{\text{a-s}}(x)$ of the mean density of states and $\rho_{(1),0}^{\text{a-s}}(x;\alpha)$ of the mean spectral measure according to
\begin{equation}
 \mu_\alpha^{\text{a-s}}(x) = {\partial \over \partial \alpha } (\alpha  \rho_{(1),0}^{\text{a-s}}(x;\alpha) )\,,
\end{equation}
and the claim follows.
\end{proof}

Combining (\ref{dc2}) with (\ref{Lk1}) shows
\begin{align}\label{Lk1d}
w_1^{\text{(a-s)}^2}(\alpha) & = 2 \alpha \nonumber \\
w_2^{\text{(a-s)}^2}(\alpha) & = 2 \alpha (1 + 3 \alpha) \nonumber \\
w_3^{\text{(a-s)}^2}(\alpha) & = 2 \alpha (2 + 9 \alpha + 10 \alpha^2).
\end{align}

\subsection{Dyson's disordered chain}
As a mathematical model of a disordered system, Dyson \cite{Dy53} made a study of the distribution of the squared frequencies for $N$ coupled oscillators along a line, in the circumstance that the spring constants, and/or the masses are random variables (for some example of lattices with random initial data see \cite{KKTM,GMM} and the references therein). Let $K_j$ denote the spring constant of the $j$-th spring, and let $m_j$ denote the attached mass. With free boundary conditions it was shown in \cite{Dy53} that the allowed frequencies $\omega$ of the chain are given by the $(N-1)$ positive eigenvalues of the matrix $i \mathbf \Lambda$, where $\mathbf \Lambda$ is the $(2N - 1) \times (2N-1)$ anti-symmetric tridiagonal matrix specified by having the diagonal above the main diagonal with entries
\begin{equation}\label{dc7}
(\lambda_1^{1/2}, \lambda_2^{1/2},\dots, \lambda_{2N-1}^{1/2}), \qquad \lambda_{2j-1} = K_j/m_j, \:\lambda_{2j} = K_j/m_{j+1}.
\end{equation}
This matrix also has one zero eigenvalue, in keeping with the choice of free boundary conditions.

As observed in \cite{Dy53}, the structure  (\ref{dc7}) implies that the simplest type of disorder  is to choose $\{ \lambda_j \}$ from a common probability distribution,
giving rise to what was termed a a Type I disordered chain. Moreover, with the common probability distribution equalling the gamma distribution $\Gamma[\alpha,1/\kappa]$, Dyson was able to obtain a number of analytic results. Substituting the gamma distribution in (\ref{dc7}),
up to scaling by a factor of $1/\sqrt{\kappa}$, we see that Dyson was in fact studying matrices from the anti-symmetric Gaussian $\alpha$-ensemble  \eqref{eq:alphaense}.
One of the analytic results obtained in \cite[Eq.~(63))]{Dy53} was, in the case $\alpha \in \N$,
an explicit functional form for the integrated mean density of states in squared variables. Our
Theorem \ref{thm:antialpha} generalizes the result of Dyson by giving a special function evaluation of the mean density of states for general $\alpha > 0$.
 
 Two features of Dyson's exact solution have received particular prominence as illustrating universal features, shared by models beyond the solvable case (see the recent review \cite{Fo20} for a discussion and references).
One is the functional form of the singularity $x\to 0^+$ \cite[consequence of (72)]{Dy53}:
\begin{equation}\label{dg1}
\mu_\alpha^{(\text{a-s})^2}(x) \sim \frac{c}{x|\ln(x)|^3},
\end{equation}
for some constant $c = c_\alpha >0$, now referred to as the Dyson singularity. For the constant, Dyson's result implies that for $\alpha \in \mathbb N$
\begin{equation}\label{dg2}
c_\alpha = 2 \bigg ( {\pi^2 \over 6} - \sum_{l=1}^{\alpha - 1} {1 \over l^2} \bigg ).
\end{equation}
We can also recover this result from the explicit expression \eqref{dc1}. 
First, we require knowledge of the asymptotic behaviour of the Whittaker function \cite[Eq.~(13.14.19)]{DLMF}
for $x \to 0^+$,
\begin{equation}\label{dg3}
|W_{-\alpha + 1/2,0}(-x) | \sim {\sqrt{x} \over \Gamma(\alpha)} | \ln(x) + \psi(\alpha) +2 \gamma + i \pi|,
\end{equation}
where $\psi(\alpha)$ denotes the digamma function and $\gamma$ denotes Euler's constant. This substituted in (\ref{dc1}) show that for $x \to 0^+$
\begin{equation}\label{dg4}
\mu_\alpha^{(\text{a-s})^2}(x) \sim
{2 \over x} {\psi'(\alpha) \over | \ln(x) |^3}.
\end{equation}
From the explicit formula for the trigamma function
$$
\psi'(\alpha) = \sum_{n=0}^\infty {1 \over ( \alpha + n)^2}
$$
we see that the constant of proportionality in (\ref{dg4}) reduces to Dyson's result (\ref{dg2}) for $\alpha \in \mathbb N$.

The other prominent feature of Dyson's exact solution solution relates to the (scaled) limit $\alpha \to \infty$, which corresponds to weak disorder; see the discussion of \cite[\S 3.4]{Fo20} for more details and references. Proceeding analogously to the analysis of Remark \ref{R31} in the Gaussian case, we see that upon the scaling $x \mapsto \kappa y$ and $W_1^{0, \rm L}(x) \mapsto {1 \over \kappa} W_1^{0, \rm L}(y)$,
for $\kappa, \alpha \to \infty$ with $\kappa/\alpha = {\rm O}(1)$ \eqref{eq:Lag_N_order} reduces to the quadratic equation
$$
W_1^{0, \rm L}(y) + {1 \over y} + {\alpha \over \kappa} (W_1^{0, \rm L}(y))^2 = 0.
$$
Subject to the requirement that for large $y$ this behaves as $1/y$, the solution of this quadratic equation is
$$
W_1^{0, \rm L}(y) = {\kappa \over 2 \alpha} \Big ( 1 - (1 - 4 \alpha/\kappa y)^{1/2} \Big ).
$$
Consequently, in the same limit,
$$
\kappa \rho_{(1),0}^{\rm L}(y;\alpha_1,\alpha) \to{\kappa \over 2 \pi \alpha} (4 \alpha/ \kappa y - 1)^{1/2},
\qquad 0 < y < 4 \alpha/\kappa.
$$
But $\rho_{(1),0}^{\rm L}(y;\alpha_1,\alpha)|_{\alpha_1 = -1} = \rho_{(1),0}^{({(\rm a-s)}{}^2}(y;\alpha)$ and so according to (\ref{dc2})
\begin{equation}\label{dg5}
\kappa \mu_{\alpha}^{{(\rm a-s)}{}^2)}(\kappa y) \to {\kappa \over 2 \pi} {\partial \over \partial \alpha}
(4 \alpha/(\kappa y ) - 1)^{1/2}
= {1 \over \pi} (4 \alpha y^{1/2}/ \kappa - y)^{-1/2}, \qquad 0 < y < 4 \alpha/\kappa.
\end{equation}
With $\kappa = \alpha$, this is in precise agreement with the limiting result obtained by Dyson \cite[Eq.~(43)]{Dy53}.

\begin{acknowledgments}
	The research of PJF is part of the program of study supported
	by the Australian Research Council Centre of Excellence ACEMS,
	and the Discovery Project grant DP210102887.
	The research of GM is part of the program of study supported by the European Union's H2020 research and innovation program under the Marie Sk\l owdoska--Curie grant No. 778010 {\em  IPaDEGAN}. We thank G.~Akemann for (indirectly) facilitating this collaboration by inviting GM to speak as part of the Bielefeld-Melbourne random matrix seminar in December 2020. We thank K.D.~Trinh for alerting us that a recursive formula for the covariances $\{\mu_{(p,q),0}^{\rm G} \}$ was first given in \cite{Sp20}. 
\end{acknowledgments}

\section*{Data Availability Statement}

The code used to produce Figure \ref{fig:simulations} is available on GitHub \cite{RMT_repo}.

 \providecommand{\noopsort}[1]{}
  \providecommand{\singleletter}[1]{#1}
\bibliography{loopbib}

\end{document}